 \newtheorem{thm}{Theorem}[section]
   \newtheorem{lem}[thm]{Lemma}
   \newtheorem{prop}[thm]{Proposition}
   \newtheorem{cor}[thm]{Corollary}
   \theoremstyle{definition}
   \newtheorem{dfn}[thm]{Definition}
   \newtheorem{exm}[thm]{Example}
   \newtheorem{rmk}[thm]{Remark}
\newtheorem*{theorem*}{Theorem}
   \DeclareMathOperator{\DAtom}{DAtom}
   \DeclareMathOperator{\Atom}{Atom}
   \DeclareMathOperator{\Sub}{Sub}
   \DeclareMathOperator{\girth}{girth}
   \DeclareMathOperator{\diam}{diam}
   \DeclareMathOperator{\Id}{Id}
\begin{document}

\title{On graphs of bounded semilattices}


\author{Parastoo Malakooti Rad}
\address{P. Malakooti Rad, Department of Mathematics, Qazvin Branch, Islamic Azad University, Qazvin, Iran}
\email{pmalakoti@gmail.com}

\author{Peyman Nasehpour}
\address{P. Nasehpour, Department of Engineering Science, Golpayegan University of Technology, Golpayegan, Iran}
\email{nasehpour@gut.ac.ir, nasehpour@gmail.com}

 \keywords{Intersection Graphs, Bounded Semilattices, Eulerian Graph, Planar Graph}
 \subjclass[2000]{05C99; 06A12}
 \maketitle

\begin{abstract}
In this paper, we introduce the graph $G(S)$ of a bounded semilattice $S$, which is a generalization of the intersection graph of the substructures of an algebraic structure. We prove some general theorems about these graphs; as an example, we show that  if $S$ is a product of three or more chains, then $G(S)$ is Eulerian if and only if either the length of every chain is even or all the chains are of length one. We also show that if $G(S)$ contains a cycle, then $\girth(G(S)) = 3$. Finally, we show that if $(S,+,\cdot,0,1)$ is a dually atomic bounded distributive lattice 
whose set of dual atoms is nonempty, 
 and the graph $G(S)$ of $S$ has no isolated vertex, then $G(S)$ is connected with $\diam(G(S))\leq 4$.
 \end{abstract}

\section{Introduction}\label{sec:intro}

The partially ordered set $(S, \leq)$ is called a \emph{meet-semilattice} if every two elements $x$ and $y$ of $S$ have a greatest lower bound $x \wedge y \in S$. Equivalently, for  a binary operation $\wedge$ on $S$, the structure  $(S, \wedge)$ is a \emph{meet-semilattice} if $\wedge$ is associative, commutative, and idempotent (i.e., a commutative idempotent semigroup). We denote the smallest element of a meet-semilattice by $0$, and the largest element by $1$. It is called \emph{bounded} if it has a smallest and a largest element. A \emph{join-semilattice} is defined dually, and a \emph{bounded semilattice} will be a meet or join-semilattice with both a $0$ and a $1$.

Given a bounded semilattice $(S, \circ, 0, 1)$, define a graph $G(S)$ as follows:

\begin{enumerate}

\item The set of vertices $V$ of $G(S)$ is the set of all elements of $S$ except $0$ and $1$.

 \item The vertices $x,y \in V$ are adjacent, i.e. $\{x,y\}$ belongs to the edges $E$ of $G(S)$, if $x \neq y$ and $x\circ y \neq 0$.

 \end{enumerate}

We need two more definitions. Let $(S, \circ, 0, 1)$ be a bounded semilattice, then an \emph{atom} is a minimal element of $S-\{0,1\}$, and $S$ will be called \emph{Artinian} if every decreasing chain of elements becomes stationary.

In this paper, we initiate the study of the graph $G(S)$ for general bounded semilattices $S$, and, for example, we prove the following new results:

\medskip

\begin{itemize}\itemindent=5em
\item[{\bf Theorem \ref{K2K12}}:] If $G(S)$ is a path of length $k$, then either $G(S)=K_2$, or $G(S)=K_{1,2}$.
\item[{\bf Proposition \ref{Artinianprop}}:] If $S$ is Artinian with more than two elements, then $G(S)$ is a complete graph if and only if $S$ has exactly one atom.
\item[{\bf Theorem \ref{Euler2}}:] If $S$ is a product of two or more chains, then $G(S)$ is Eulerian if and only if either the length of every chain is even or if all the chains are of length one.
\item[{\bf Theorem \ref{TreeStar}}:] If $G(S)$ is a tree, then it is a star graph.
\item[{\bf  Proposition \ref{MinDiam}}:] If $S$ has more than three elements and exactly one atom, then $G(S)$ is a complete graph.
\item[{\bf Theorem \ref{girth}}:] If $G(S)$ contains a cycle, then its girth is equal to $3$.
\end{itemize}

\medskip

Intersection graph theory (for short IGT) is a classical topic in the theory of graphs \cite{ErdosGoodmanPosa1966}. For a good introduction to IGT, one can refer to the book \cite{McKeeMcMorris1999}. And in this classic book, some applications of IGT in different fields of science such as biology, psychology, and computing are mentioned in details \cite[\S 2 and \S 3]{McKeeMcMorris1999}. Although all graphs are intersection graphs \cite{Szpilrajn1945}, some classes of intersection graphs are of special interest. For example, the intersection graphs of some classes of geometrical objects, e.g. closed intervals of the real-line (see \cite[p. 1]{Cohen1977}, \cite{Cohen1978} and \cite[p. 43]{CohenBriandNewman1990}), chords of a circle \cite[p. 137]{Sherwani2002}, trapezoids between two horizontal lines \cite{LiangLuTang1997}, and unit disks in plane \cite{LotkerPeleg2010} have interesting applications in science and industry.

On the other hand, the intersection graphs of substructures of an algebraic structure have been investigated by many authors \cite{AfkhamiKhashyarmanesh2014, Bosak1964, ChakrabartyGhoshMukherjeeSen2009, CsakanyPollak1969, Osba2016, Shen2010, Zelinka1975, Zelinka1973}. Our original motivation for this work was the intersection graphs of submodules of a module \cite{AkbariTavallaeeGhezelahmad2012} and our discussions on this topic led us to work on a more general context, i.e. graphs that we attributed to bounded semilattices.

\section{The Graphs of Bounded Semilattices}\label{sec:bsl}

Let us recall that $(S,\circ)$ is called a semilattice, if $(S,\circ)$ is a commutative semigroup and its binary operation $\circ$ is idempotent, i.e. $x \circ x= x$, for all $x\in S$ \cite[Definition 2.1.1]{ChajdaHalasKuhr2007}. It is good to mention that a similar definition for semilattices is given in \cite[Section 4.1]{Vickers1989}. It is easy to see that a partial order is induced on the semilattice $S$ by setting $x \leq y$ whenever $x\circ y = x$, for all $x,y \in S$ \cite[Theorem 2.1.2]{ChajdaHalasKuhr2007}. Finally, note that if 1 is the neutral element of $S$, then $x \leq 1$, for all $x\in S$. And if $0$ is an absorbing element of a semilattice $S$, that is, $x\circ 0 = 0$, for all $x\in S$, then $0$ is the least element of $S$, i.e. $0 \leq x$, for all $x\in S$. If the semilattice $S$ possesses neutral and absorbing elements, then $S$ is called bounded, since $0 \leq x\leq 1$ for all $x\in S$. One of the simplest semilattices that may come to one's mind is the semilattice $(\mathbb{P}(A), \cap)$, where by $\mathbb{P}(A)$  we mean the set of all subsets of the set $A$.

One can easily check if $A$ is a set and $\mathcal{A} \subseteq\mathbb{P}(A)$, then $(\mathcal{A}, \cap)$ is a bounded semilattice if and only if the following properties hold:

\begin{enumerate}

\item If $X, Y \in \mathcal{A}$, then $X \cap Y \in \mathcal{A}$,

\item There are two distinct sets $M_1$ and $M_2$ in $\mathcal{A}$ such that $M_1 \subseteq X \subseteq M_2$, for all $X \in  \mathcal{A}$.

\end{enumerate}

In this paper, the semilattice $\mathbb{P}(S)$ is of special interest, when $S$ has an algebraic structure since it provides some good examples for our results. Now, we attribute a graph to a bounded semilattice, inspired by the definition of intersection graphs in \cite{McKeeMcMorris1999}.

\begin{dfn}

\label{IntersectionGraphSemiLattice}

Let $(S,\circ,0,1)$ be a bounded semilattice. We attribute a graph $G(S)$ to $S$, whose vertices $V$ and edges $E$ are determined as follows:

\begin{enumerate}

\item The set of vertices $V$ is the set of all elements of $S$ except $0$ and $1$.

 \item The vertices $x,y \in V$ are adjacent, i.e. $\{x,y\} \in E$, if $x \neq y$ and $x\wedge y \neq 0$.
\hfill $\diamond$ 
 \end{enumerate}
\end{dfn}

The following remark justifies why our definition for graphs of semilattices given in Definition \ref{IntersectionGraphSemiLattice} is a generalization of the intersection graphs of substructures of different algebraic structures.

\begin{rmk}[Intersection Graphs of Algebraic Structures]\label{IntersectionGraphEx}

$ $

\begin{enumerate}
\item Let $S$ be a semigroup and $\mathcal{S}$ the set of all subsemigroups of $S$. Clearly, the structure $(\mathcal{S} \cup \{\emptyset\}, \cap)$ is a bounded semilattice and its graph $G(\mathcal{S})$, given in Definition \ref{IntersectionGraphSemiLattice} of the current paper, coincides with the definition of the graphs of semigroups introduced in \cite{Bosak1964}.

\item Let $R$ be a commutative ring with a nonzero identity and $M$ a unitary nonzero $R$-module. It is obvious that the intersection graph of an $R$-module $M$, introduced in \cite{AkbariTavallaeeGhezelahmad2012}, is just the graph $G(\Sub_R(M))$ of the bounded semilattice $\Sub_R(M)$, where by $\Sub_R(M)$, we mean the set of all $R$-submodules of $M$. For more results on the intersection graph of a module, one may also refer to \cite{Yaraneri2013}.

\item Let $S$ be a semiring and $M$ an $S$-semimodule. It is easy to see that $(\Sub_S(M), \cap)$ is a bounded semilattice, where by $\Sub_S(M)$, we mean the set of all $S$-subsemimodules of $M$. In some cases, we will investigate the intersection graph $G(M)$ of the subsemimodules of the $S$-semimodule $M$.

\item Other examples for bounded semilattices and their intersection graphs include subgroups of a group \cite{CsakanyPollak1969,Zelinka1975}, normal subgroups of a nontrivial group, left ideals of a semiring \cite[\S 6]{Golan1999}, left ideals of the ring possessing a nonzero identity \cite{ChakrabartyGhoshMukherjeeSen2009}, subsemirings of the semiring \cite{LinRatti1970}, subsemimodules of a nonzero semimodule \cite[\S 14]{Golan1999}, and clopen sets of a topology, where by a clopen set, it is meant a set that is both closed and open \cite[Definition 3.6.4]{Vickers1989}.
\hfill $\diamond$ 
\end{enumerate}

\end{rmk}

\begin{dfn}

\label{atomdef}

Let $S$ be a bounded semilattice. An element $a\in S$ is called to be an atom, if $0 < a < 1$ and also, if $0 \leq y \leq a$, then either $y=0$ or $y = a$. We gather atoms of $S$ in the set $\Atom(S)$. Also, an element $d\in S$ is called to be a dual atom, if $0 < d < 1$ and also, if $d \leq y \leq 1$, then either $y=d$, or $y=1$. We gather dual atoms of $S$ in $\DAtom(S)$.
\hfill $\diamond$ 
\end{dfn}

\begin{rmk}
Let $S$ be a bounded semilattice. It is clear that atoms of $S$ are the minimal elements of the poset $S-\{0,1\}$, and dual atoms of $S$ the maximal elements of the poset $S-\{0,1\}$. Note that if $\mathcal{S}$ is the semilattice of the ideals of a commutative semiring $R$, then the dual atoms of $\mathcal{S}$ are nothing, but the maximal ideals of $R$.
\hfill $\diamond$ \end{rmk}

Let us recall that the degree of a vertex $v$ in a graph $G$, denoted by $d(v)$, is the number of edges of $G$ incident with $v$ \cite[p. 7]{BondyMurty2008}.

\begin{prop}

\label{deg1}

Let $S$ be a bounded semilattice and the graph $G(S)$ have no cycle of length $3$. If $y\in \Atom(S)$, then $\deg(y) =1$.
\end{prop}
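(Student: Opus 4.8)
The plan is to determine exactly which vertices can be adjacent to an atom, and then to forbid two distinct neighbours by producing a triangle. Fix $y\in\Atom(S)$ and suppose $x$ is a vertex adjacent to $y$, so that $x\neq y$, $x\notin\{0,1\}$, and $x\wedge y\neq 0$. The first step uses minimality of $y$: since $x\wedge y\leq y$ and $y$ is an atom, Definition \ref{atomdef} forces $x\wedge y\in\{0,y\}$, and as it is nonzero we get $x\wedge y=y$, that is, $y\leq x$. Combined with $x\neq y$, this shows every neighbour of $y$ satisfies $y<x<1$; conversely, any vertex $x$ with $y<x<1$ meets $y$ in $x\wedge y=y\neq 0$ and is therefore adjacent to $y$. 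Hence the neighbourhood of $y$ is exactly $\{x\in S: y<x<1\}$.

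Second, I carry out the triangle argument. Assume for contradiction that $y$ has two distinct neighbours $x_1$ and $x_2$. By the description just obtained, $y\leq x_1$ and $y\leq x_2$, so $y$ is a lower bound of $\{x_1,x_2\}$ and hence $y\leq x_1\wedge x_2$. In particular $x_1\wedge x_2\geq y>0$, so $x_1\wedge x_2\neq 0$ and the two vertices $x_1$ and $x_2$ are themselves adjacent. Then $y$, $x_1$, $x_2$ are pairwise adjacent and span a cycle of length $3$, contradicting the hypothesis on $G(S)$. This already yields $\deg(y)\leq 1$.

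The remaining point—promoting $\deg(y)\le 1$ to $\deg(y)=1$—is where I expect the only genuine difficulty, since the upper bound is vacuously compatible with $y$ being isolated (for example, in the degenerate semilattice $S=\{0,y,1\}$ the atom $y$ is an isolated vertex). Equality therefore hinges on showing that $y$ is not isolated, i.e. that some $x$ with $y<x<1$ exists. I would secure such an $x$ from the standing hypotheses governing the statement—typically by placing $y$ strictly below a dual atom, which is possible whenever $y$ is not itself maximal in $S-\{0,1\}$—and I would explicitly set aside the degenerate case in which the whole vertex set reduces to a single atom. Given one such neighbour, the triangle step of the preceding paragraph guarantees it is unique, so $\deg(y)=1$ as claimed.
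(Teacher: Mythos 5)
Your first two paragraphs are exactly the paper's argument: a neighbour $x$ of the atom $y$ satisfies $x\wedge y=y$ by minimality, so two distinct neighbours $x_1,x_2$ both lie above $y$, whence $0\neq y\leq x_1\wedge x_2$ and $y-x_1-x_2-y$ is a forbidden triangle. That part is correct and coincides with the paper's proof step for step (the paper writes it as $0\neq y=y^2\leq y_1y_2$, using idempotence rather than invoking the greatest-lower-bound property, but it is the same computation).

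Your third paragraph, however, identifies a real defect rather than resolving one, and you should say so plainly instead of promising a fix: the step from $\deg(y)\leq 1$ to $\deg(y)=1$ cannot be secured from "standing hypotheses," because the proposition has none that exclude isolation. Your own example $S=\{0,y,1\}$ is a genuine counterexample to the statement as written ($y\in\Atom(S)$, $G(S)$ triangle-free, $\deg(y)=0$), and so is $S=\{0,a,b,1\}$ with $a\wedge b=0$, where both atoms are isolated vertices of a triangle-free graph. The paper's proof silently makes exactly the leap you refused to make: it assumes $\deg(y)\geq 2$, derives a contradiction, and concludes $\deg(y)=1$, never ruling out $\deg(y)=0$. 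So what you have actually proved is the corrected statement $\deg(y)\leq 1$ --- which is also all the paper's proof establishes --- and your hedged final paragraph should be replaced by the observation that equality fails in general and the proposition ought to read $\deg(y)\leq 1$ (or add a hypothesis guaranteeing $y$ a neighbour, e.g.\ that $y$ lies strictly below some vertex, as you suggest). In short: no gap in your mathematics, but the promised completion does not exist, and it is better to assert the counterexample than to defer to hypotheses the statement does not contain.
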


\begin{proof}
Let $y\in \Atom(S)$, but $\deg(y) \geq 2$. So, there exist at least two distinct vertices  $y_1$ and $y_2$ of $G(S)$ such that both  are adjacent to $y$. Therefore, $y y_1\neq 0$ and $y y_2 \neq 0$. Since $y\in \Atom(S)$, $y y_1= y= y y_2$. Hence, $y \leq y_1$ and $y\leq y_2$. Thus $0 \neq y=y^2 \leq y_1 y_2$  and this implies that $y_1$ and $y_2$ are adjacent. Thus $G(S)$ contains a cycle  $y-y_1-y_2-y$, which is a contradiction. Consequently, $\deg(y) =1$.
\end{proof}

\begin{prop}

\label{minimax}

Let $S$ be a bounded semilattice and $y$ a vertex of $G(S$). If $\deg (y)=1$, then either $y \in \Atom(S)$ or $y \in \DAtom(S)$.
\end{prop}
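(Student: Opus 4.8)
The plan is to prove the contrapositive: I will assume that the vertex $y$ is \emph{neither} an atom \emph{nor} a dual atom of $S$, and show that this forces $\deg(y)\geq 2$, which contradicts the hypothesis $\deg(y)=1$. Throughout I use that, since $y$ is a vertex of $G(S)$, we have $0<y<1$ in the order induced by the semilattice operation, and that $x\leq y$ is by definition equivalent to $xy=x$.

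Because $y\notin\Atom(S)$ while $0<y<1$, the defining implication for an atom in Definition~\ref{atomdef} must fail: there is an element $z$ with $0\leq z\leq y$, $z\neq 0$ and $z\neq y$, i.e. $0<z<y<1$. In particular $z$ is a vertex of $G(S)$, and from $z\leq y$ we get $zy=z\neq 0$, so $z$ is adjacent to $y$. Dually, since $y\notin\DAtom(S)$, the dual-atom condition fails, so there is an element $w$ with $y\leq w\leq 1$, $w\neq y$ and $w\neq 1$, i.e. $0<y<w<1$; thus $w$ is a vertex and $yw=y\neq 0$, so $w$ is adjacent to $y$ as well.

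Finally, the chain $z<y<w$ shows $z\neq w$, so $y$ has at least the two distinct neighbors $z$ and $w$, whence $\deg(y)\geq 2$, the desired contradiction. I do not expect any serious obstacle here: the only point requiring care is extracting the strictly intermediate elements $z$ and $w$ from the \emph{negations} of the atom and dual-atom conditions; once they are produced, the two adjacencies follow immediately from $zy=z$ and $yw=y$, with no genuine computation involved. Note that this argument is, in spirit, the converse direction complementing Proposition~\ref{deg1}, and it uses only the order structure of $S$ together with the adjacency rule of $G(S)$.
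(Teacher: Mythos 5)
Your proof is correct, but it takes a genuinely different route from the paper's. The paper argues directly from $\deg(y)=1$: letting $z$ be the unique neighbour of $y$, it first shows $yz\in\{y,z\}$ (if $yz\neq y$, then $y\cdot yz = yz\neq 0$ makes $yz$ a neighbour of $y$, which by uniqueness forces $yz=z$), so that $y$ and $z$ are comparable, and then reads off atomhood or dual atomhood from the absence of any other neighbour. You instead prove the contrapositive, bypassing the unique neighbour entirely: from the failure of the atom condition in Definition~\ref{atomdef} you extract $z$ with $0<z<y$, from the failure of the dual-atom condition $w$ with $y<w<1$, and both are vertices adjacent to $y$ via $zy=z\neq 0$ and $yw=y\neq 0$, distinct because $z<y<w$, giving $\deg(y)\geq 2$. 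Your handling of the negations is sound, since the side condition $0<y<1$ holds automatically for any vertex of $G(S)$. Your version is shorter --- it needs no case analysis on the product $yz$ and uses only a count of two distinct neighbours rather than uniqueness --- and it proves slightly more: every vertex of degree at most $1$, including an isolated vertex, is an atom or a dual atom. What the paper's direct argument buys in exchange is a structural byproduct your proof does not give, namely that the unique neighbour $z$ of a degree-one vertex is comparable with $y$ (either $y\leq z$ or $z\leq y$), a comparability fact in the same spirit as the arguments of Lemma~\ref{MaxK2} and Theorem~\ref{K2K12}.
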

\begin{proof}
  Let $y$ be a vertex of $G(S$) such that $\deg (y)=1$  and $z$ be the only vertex of $G(S)$ such that $z$ is adjacent to $y$. Clearly, $y z \neq 0$. Our claim is that either $yz=z$ or $yz=y$. Suppose that $yz \neq y$. Therefore, $y\cdot yz = yz \neq 0$, which means that $yz$ is adjacent to $y$ and this implies that $yz = z$. So we have showed that either $y \leq z$ or $z \leq y$. If $y \leq z$, then there is no nonzero element $l\in S$ such that $l < y$. So, $y$ is in $\Atom(S)$. If $z \leq y$, then there is no $m \in S-\{1\}$ such that $y < m$. So, $y$ is in $\DAtom(S)$ and the proof is complete.
\end{proof}

\begin{rmk} The converse of Proposition \ref{minimax} does not hold. For example, let $(R, \mathfrak{m})$ be a quasi-local semiring, i.e. a semiring with the unique maximal ideal $\mathfrak{m}$. Clearly, if $|\Id(R)| \geq 5$, then $\deg(\mathfrak{m})\geq 2$. Note that any valuation semiring is quasi-local \cite[Theorem 1.8]{Nasehpour2017}.
\hfill $\diamond$ 
\end{rmk}

Let us recall that a path is a simple graph whose vertices can be arranged in a linear sequence in such a way that two vertices are adjacent if they are consecutive in the sequence, and are nonadjacent otherwise \cite[p. 16]{BondyMurty2008}.

\begin{lem}

\label{MaxK2}

Let $S$ be a bounded semilattice and $G(S)$ a path as sequence $$y_1, y_2,\ldots,y_t,$$ where $t\geq2$. If $y_1\in \DAtom(S)$, then $G(S)= K_2$, where   $K_2$  is  the complete graph on two vertices.
\end{lem}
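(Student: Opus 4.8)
The plan is to exploit that $y_1$ is an endpoint of the path, so $\deg(y_1) = 1$ with $y_2$ its unique neighbor, together with the fact that a dual atom cannot lie strictly below any vertex. First I would pin down the order relation between $y_1$ and $y_2$. Since $y_1$ and $y_2$ are adjacent, $y_1 y_2 \neq 0$, and because $y_1 y_2 \leq y_1 < 1$ the element $z := y_1 y_2$ is again a vertex of $G(S)$. Idempotency gives $y_1 z = y_1 (y_1 y_2) = y_1 y_2 = z \neq 0$, so $z$ is adjacent to $y_1$ unless $z = y_1$. As $y_1$ has $y_2$ as its only neighbor, this forces $z \in \{y_1, y_2\}$, that is $y_1 y_2 = y_1$ or $y_1 y_2 = y_2$; equivalently $y_1 \leq y_2$ or $y_2 \leq y_1$.

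Next I would rule out $y_1 \leq y_2$. Since $y_2$ is a vertex, $y_2 \neq 1$, and $y_2 \neq y_1$ because they are distinct vertices of the path. But $y_1 \in \DAtom(S)$ means that $y_1 \leq y_2 \leq 1$ forces $y_2 = y_1$ or $y_2 = 1$, a contradiction. Hence $y_2 \leq y_1$.

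Finally, I would establish $t = 2$ by contradiction: suppose $t \geq 3$. Then $y_3$ is a vertex adjacent to $y_2$ but, by the definition of a path, not adjacent to the nonconsecutive vertex $y_1$. Thus $y_2 y_3 \neq 0$ while $y_1 y_3 = 0$. However, the meet operation $\wedge$ is monotone, so from $y_2 \leq y_1$ we obtain $y_2 y_3 = y_2 \wedge y_3 \leq y_1 \wedge y_3 = y_1 y_3 = 0$, whence $y_2 y_3 = 0$, contradicting $y_2 y_3 \neq 0$. Therefore $t = 2$, and $G(S)$, consisting of the single edge $\{y_1, y_2\}$ on two vertices, equals $K_2$.

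The only delicate point is the first step, establishing comparability of $y_1$ and $y_2$: one must observe that the meet $y_1 y_2$ is itself a nonzero vertex adjacent to $y_1$, so the degree-one constraint collapses it onto an endpoint of the edge. Once $y_2 \leq y_1$ is secured, the monotonicity of $\wedge$ propagates the non-adjacency $y_1 y_3 = 0$ down to $y_2 y_3 = 0$, and this is the crux that forbids a third vertex.
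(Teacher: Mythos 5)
Your proof is correct. The first half coincides with the paper's argument: you show $y_1y_2$ is itself a vertex adjacent to $y_1$ unless it equals $y_1$, use $\deg(y_1)=1$ to force $y_1y_2\in\{y_1,y_2\}$, and then eliminate $y_1\leq y_2$ via the dual-atom hypothesis, arriving at $y_2\leq y_1$ exactly as the paper does. Where you genuinely diverge is in ruling out $t\geq 3$. The paper argues by cases on the value of $y_2y_3$: since the only neighbors of $y_2$ are $y_1$ and $y_3$, it must be that $y_2y_3\in\{y_1,y_2,y_3\}$, and each case is shown separately to make $y_1$ and $y_3$ adjacent. You instead invoke monotonicity of the meet in one stroke: $y_2\leq y_1$ gives $y_2y_3\leq y_1y_3$, and since $y_1,y_3$ are nonconsecutive on the path, $y_1y_3=0$, forcing $y_2y_3=0$ against the adjacency of $y_2$ and $y_3$. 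This is shorter, avoids the enumeration of possible values of $y_2y_3$ entirely, and isolates the real mechanism: whenever $u\leq v$, every neighbor of $u$ other than $v$ is also a neighbor of $v$, so a degree-one dual atom can dominate no vertex with further neighbors. The paper's case analysis, by contrast, stays closer to the raw adjacency definition and is the style reused in Theorem \ref{K2K12}, so the two proofs trade brevity against uniformity with the rest of the paper. One small point of care, which you handle correctly: monotonicity of $\wedge$ (from $a\leq b$ infer $a\wedge c\leq b\wedge c$) is not stated in the paper but follows immediately from associativity, commutativity, and idempotency, so your appeal to it is legitimate in a bare bounded semilattice.
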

\begin{proof}
Let $G(S)$ be a path as sequence $y_1, y_2,\ldots,y_t$, where $t\geq2$ and $y_1\in \DAtom(S)$. Then, either $y_1 y_2=y_1$ or the vertex $y_1 y_2$ is adjacent to $y_1$. If $y_1 y_2=y_1$, then $y_1 \leq y_2$. This implies that $y_1 = y_2$, since  $y_1 \in \DAtom(S)$, and obviously, this is a contradiction, since $y_1$ and $ y_2$ are distinct vertices of $G(S)$. Since by assumption, the only vertex adjacent to $y_1$ is the vertex $y_2$, $y_1 y_2=y_2$ and this means that $y_2 \leq y_1$. Now, we prove that $t$ cannot be greater than 2. In contrary, let $t\geq 3$. Therefore, either $0\neq y_2  y_3=y_2$ or the vertex $y_2 y_3$ is adjacent to $y_2$. If $0\neq y_2 y_3=y_2$, then $y_2\leq y_3$ and so $y_2\leq y_3 y_1$. This means that $y_3 y_1 \neq 0$ and so the vertices $y_1$ and $y_3$ are adjacent, which is a contradiction. But the only vertices that are adjacent to $y_2$ are $y_1$ and $y_3$. So, either $y_2 y_3= y_1$ or $y_2 y_3= y_3$. If  $y_2 y_3= y_1$, then $y_1$ and $y_3$ are adjacent, which is a contradiction. Otherwise,  $y_2 y_3= y_3$ and this implies that $y_3\leq y_2$. Now in view of $y_2\leq y_1$, we get that the vertices $y_1$ and $y_3$ are adjacent, again a contradiction. Hence, $G(S)=K_2$ and the proof is complete.
 \end{proof}

 \begin{cor}
Let $S$ be a bounded semilattice such that $G(S)$ is a path. Then $G(S)=K_2$ if and only if $|\DAtom(S)|=|\Atom(S)|=1$.
\end{cor}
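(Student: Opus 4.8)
The plan is to prove the two implications of the biconditional separately, using Proposition~\ref{minimax}, Lemma~\ref{MaxK2}, and the defining conditions of Definition~\ref{atomdef}. For the implication $G(S)=K_2\Rightarrow|\Atom(S)|=|\DAtom(S)|=1$, note that $K_2$ has exactly two vertices $a\neq b$ joined by an edge; since the vertex set of $G(S)$ is $S-\{0,1\}$, this forces $S=\{0,a,b,1\}$, and the edge condition gives $a\wedge b\neq 0$. The element $a\wedge b$ is a lower bound of $a$ and $b$ lying in the four-element set $S$; it is not $0$ by the edge condition and not $1$ since $a\wedge b\leq a<1$, so it equals $a$ or $b$, whence $a$ and $b$ are comparable, say $a<b$ after renaming. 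Checking Definition~\ref{atomdef} directly, the only elements below $a$ are $0$ and $a$, so $a$ is an atom and, dually, $b$ is a dual atom; moreover $b$ is not an atom and $a$ is not a dual atom, each witnessed by $a<b$. Hence $\Atom(S)=\{a\}$ and $\DAtom(S)=\{b\}$, giving $|\Atom(S)|=|\DAtom(S)|=1$. I expect this direction to reduce to a finite verification once the four elements of $S$ are identified.

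For the converse, assume $|\Atom(S)|=|\DAtom(S)|=1$ and write $G(S)$ as a path $y_1,y_2,\ldots,y_t$; I work in the nondegenerate range $t\geq 2$, which is the setting of Lemma~\ref{MaxK2}. Each endpoint $y_1$ and $y_t$ has degree $1$, so by Proposition~\ref{minimax} each is an atom or a dual atom. If at least one endpoint is a dual atom---say $y_1$, after possibly replacing the sequence by its reversal $y_t,y_{t-1},\ldots,y_1$, which represents the same path---then Lemma~\ref{MaxK2} immediately forces $G(S)=K_2$. Otherwise neither endpoint is a dual atom, so both $y_1$ and $y_t$ are atoms; since the endpoints of a path on $t\geq 2$ vertices are distinct, this produces two distinct atoms, contradicting $|\Atom(S)|=1$. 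Thus $G(S)=K_2$.

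The step I expect to be the main obstacle is organizing the converse: Lemma~\ref{MaxK2} is stated only for the \emph{first} vertex of the sequence, so the essential maneuver is to exploit the symmetry of a path by reversing the sequence in order to invoke the lemma at the far endpoint. A secondary point requiring care is the degenerate case $t=1$, where $G(S)$ is a single vertex and $S=\{0,x,1\}$ with $x$ simultaneously the unique atom and the unique dual atom, yet $G(S)\neq K_2$; the statement is therefore to be read for paths of positive length, consistent with the hypothesis $t\geq 2$ of Lemma~\ref{MaxK2}, and I would flag this assumption explicitly. With two distinct endpoints guaranteed, the counting argument that rules out ``both endpoints are atoms'' is valid, and the two directions together yield the claimed equivalence.
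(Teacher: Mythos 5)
Your proof is correct and takes essentially the paper's approach: your forward direction is the paper's argument (the two-vertex hypothesis puts $y_1y_2\in\{y_1,y_2\}$, forcing comparability and hence $\Atom(S)=\{y_1\}$, $\DAtom(S)=\{y_2\}$ or vice versa), and your converse via Proposition~\ref{minimax} and Lemma~\ref{MaxK2} (reversing the sequence to apply the lemma at either endpoint, then ruling out two atomic endpoints by $|\Atom(S)|=1$) is exactly the argument the paper compresses into ``Straightforward.'' Your flag of the degenerate one-vertex path, where $S=\{0,x,1\}$ satisfies $|\Atom(S)|=|\DAtom(S)|=1$ yet $G(S)=K_1$, is a legitimate caveat the paper leaves implicit.
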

\begin{proof}
($\Rightarrow$): Let $G(S)=K_2$. So, by definition, $G(S)$ has only two vertices $y_1$ and $y_2$ and they are adjacent, which means that $y_1 y_2 \neq 0$. Obviously, this implies that either $y_1 y_2=y_1$ or $y_1 y_2=y_2$. If $y_1 y_2=y_1$, then $y_1\leq y_2$. This implies that $y_1$ is in $\Atom(S)$ and $y_2$ is in $\DAtom(S)$. Similarly, if $y_1 y_2=y_2$, then  $y_2$ is in $\Atom(S)$ and $y_1$ is in $\DAtom(S)$ and therefore, in each case, $|\DAtom(S)|=|\Atom(S)|=1$.

($\Leftarrow$): Straightforward.
\end{proof}

\begin{thm}

\label{K2K12}

Let $S$ be a bounded semilattice and $G(S)$ a path. Then, either $G(S)=K_2$, or $G(S)=K_{1,2}$.
\end{thm}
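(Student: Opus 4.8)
The plan is to write $G(S)$ as a path $y_1,y_2,\dots,y_t$ and argue according to the length $t$. If $t=2$ there is a single edge and $G(S)=K_2$, so assume $t\ge 3$. The two endpoints $y_1$ and $y_t$ are exactly the vertices of degree $1$, so Proposition \ref{minimax} places each of them in $\Atom(S)\cup\DAtom(S)$. I would first dispose of the possibility that an endpoint is a dual atom: if $y_1\in\DAtom(S)$ then Lemma \ref{MaxK2} forces $G(S)=K_2$, contradicting $t\ge 3$, and the same applies to $y_t$ after reversing the sequence (a path read backwards is the same graph). Hence for $t\ge 3$ both endpoints must be atoms, and the whole problem reduces to showing that a path whose two endpoints are atoms has exactly three vertices.

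To pin down the internal structure I would read off meet relations from consecutive vertices. Since $y_1$ is an atom adjacent to $y_2$, from $0\ne y_1 y_2\le y_1$ I get $y_1 y_2=y_1$, i.e.\ $y_1\le y_2$. Next I examine the meet $y_2 y_3$: it satisfies $0< y_2 y_3< 1$, so it is a vertex, and $y_2(y_2 y_3)=y_2 y_3\ne 0$, so it is either $y_2$ itself or a neighbour of $y_2$; thus $y_2 y_3\in\{y_1,y_2,y_3\}$. The cases $y_2 y_3=y_2$ and $y_2 y_3=y_1$ each yield $y_1\le y_3$ and hence the forbidden edge $y_1\sim y_3$ between non-consecutive vertices, so the only surviving case is $y_2 y_3=y_3$, giving $y_3\le y_2$.

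Finally, to rule out $t\ge 4$ I would apply the identical analysis to $y_3 y_4$, which again is a vertex lying in $\{y_2,y_3,y_4\}$. Using the relation $y_3\le y_2$ just obtained, each possibility collapses: $y_3 y_4=y_3$ gives $y_3\le y_2 y_4$ so $y_2\sim y_4$; $y_3 y_4=y_4$ gives $y_4\le y_3\le y_2$ so again $y_2\sim y_4$; and $y_3 y_4=y_2$ forces $y_2\le y_3$, hence $y_2=y_3$. All three contradict the path structure (either a chord between vertices at distance two, or a repeated vertex), so $t\le 3$. Together with $t\ge 3$ this gives $t=3$, i.e.\ $G(S)=K_{1,2}$. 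The main obstacle is the bookkeeping in these two meet case-analyses: the crux is checking that every \emph{wrong} value of a consecutive meet manufactures an edge between two vertices that are two apart on the path, which a path forbids; carrying the inequality $y_3\le y_2$ forward is precisely what makes the $t\ge 4$ step close.
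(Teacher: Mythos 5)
Your argument is correct and follows the same overall route as the paper's proof: both dispose of a degree-one endpoint lying in $\DAtom(S)$ via Proposition \ref{minimax} and Lemma \ref{MaxK2}, then take $y_1\in\Atom(S)$, derive $y_1\le y_2$, and run the case analysis $y_2y_3\in\{y_1,y_2,y_3\}$ to conclude $y_3\le y_2$. Where you genuinely differ is in excluding $t\ge 4$, and there your version is in fact the more complete one. The paper's final paragraph writes $y_2y_3=y_s$ for some $1\le s\le n$ and derives a contradiction only when $s>3$; but the preceding three-vertex analysis already shows that the surviving case is $y_2y_3=y_3$, i.e.\ $s=3$, so that paragraph rules out nothing new and the case $n>3$ is never actually closed as written. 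Your extra step --- showing that $y_3y_4$ is a vertex lying in $\{y_2,y_3,y_4\}$ and, carrying the inequality $y_3\le y_2$ forward, that each possible value produces either the forbidden chord $y_2\sim y_4$ or the collapse $y_2=y_3$ --- is precisely the missing argument, and all three subcases check out. Two minor remarks: your observation that \emph{both} endpoints must be atoms (by reversing the sequence and reapplying Lemma \ref{MaxK2}) is correct but unnecessary, since only $y_1$ is ever used; and your explicit justification that a consecutive meet is a vertex equal or adjacent to each of its factors, hence confined to the closed neighbourhood of both, makes rigorous a step the paper leaves implicit.
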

\begin{proof}
Let $G(S)$ be a path as sequence $y_1,y_2,\ldots,y_n$. By Proposition \ref{minimax}, the element $y_1$ is either in $\Atom(S)$ or in $\DAtom(S)$. If $y_1 \in \DAtom(S)$, then by Lemma \ref{MaxK2}, $G(S) = K_2$.

Therefore, let, for the moment, $y_1$ be in $\Atom(S)$. Since vertices $y_1$ and $y_2$ are adjacent, we have $y_1 y_2\neq 0$. But $y_1  y_2 \leq y_1$ and $y_1$ is in $\Atom(S)$. So, $y_1\leq y_2$. Now, we prove that if $n=3$, then $G(S)=K_{1,2}$. So, let $G(S)$ be a path as a sequence $y_1,y_2,y_3$ such that $y_1 \in \Atom(S)$. First of all, if $y_2 y_3= y_2$, then $y_2\leq y_3$ and so, we conclude that $y_1$ and $y_3$ are adjacent, a contradiction. Hence, either $y_2 y_3= y_1$ or  $y_2 y_3= y_3$. If $y_2 y_3= y_1$, then $y_1$ and $y_3$ are adjacent, a contradiction. Thus, $y_2 y_3= y_3$ and so $y_3\leq y_2$.

Now, if we prove that $n$ cannot be greater than 3, we are done. In contrary, let $n>3$. Vividly, since $y_3 y_2 \neq 0$, we have $y_3 y_2 = y_t$, for some $1\leq t \leq n$. If $t > 3$, then $y_t \leq y_2$ and so, $y_2$ and $y_t$ are adjacent, a contradiction. Hence, $n$ cannot be greater than 3, i.e. $G(S) = K_{1,2}$ and the proof is complete.
\end{proof}

\begin{cor}
Let $(L,+,\cdot,0,1)$ be a bounded lattice and $G(L)$ be a path. Then, either $G(L)=K_2$ or $G(L)=K_{1,2}$. Moreover, if  $G(L)=K_{1,2}$, then $G(L)$ is of the form $y_1 - y_2 - y_3$ with $y_2=y_1 + y_3$ and $y_1  y_3 = 0$.
\end{cor}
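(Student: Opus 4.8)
The first claim is a free consequence of earlier work: a bounded lattice $(L,+,\cdot,0,1)$ is in particular a bounded meet-semilattice $(L,\cdot,0,1)$, and $G(L)$ is built from the meet $\cdot$ exactly as in Definition \ref{IntersectionGraphSemiLattice}. Hence Theorem \ref{K2K12} applies directly and yields $G(L)=K_2$ or $G(L)=K_{1,2}$, with no further argument.

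For the ``moreover'' part the plan is to extract the order-theoretic information already contained in the proof of Theorem \ref{K2K12}. Suppose $G(L)=K_{1,2}$ and label its vertices so that $y_2$ is the central (degree-two) vertex and $y_1,y_3$ are the two leaves, so the path reads $y_1-y_2-y_3$. Inspecting the $K_{1,2}$ branch of the proof of Theorem \ref{K2K12} shows that each endpoint lies below the centre, i.e. $y_1\le y_2$ and $y_3\le y_2$; in particular $y_2$ is an upper bound of $\{y_1,y_3\}$. Moreover $y_1$ and $y_3$ are distinct and nonadjacent in $K_{1,2}$, so the adjacency rule of $G(L)$ forces the meet $y_1 y_3=0$, which is already one of the two asserted identities.

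It remains to show $y_2=y_1+y_3$, and this is the only place where the full lattice structure (the existence of joins) is needed. First I would set $w:=y_1+y_3$ and verify that $w$ is a genuine vertex of $G(L)$: it satisfies $w\ge y_1>0$ so $w\neq 0$, and $w\le y_2<1$ so $w\neq 1$. Next I would use $y_1 y_3=0$ to exclude the two degenerate possibilities: if $w=y_1$ then $y_3\le y_1$ gives $y_1 y_3=y_3\neq 0$, a contradiction, and symmetrically $w\neq y_3$. Since $w y_1=y_1\neq 0$ with $w\neq y_1$, the vertex $w$ is adjacent to $y_1$, and likewise $w$ is adjacent to $y_3$; but the only vertex of $K_{1,2}$ adjacent to both leaves is the centre $y_2$, and $G(L)$ has no vertices beyond $y_1,y_2,y_3$. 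Hence $w=y_2$. The computations here are routine; the one conceptual step—and the only real obstacle—is precisely this last pinning down, namely recognising that $y_1+y_3$ cannot merely be some upper bound of the leaves but must coincide with the centre $y_2$ itself, which the trapping argument above secures.
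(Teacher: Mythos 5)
Your proof is correct and takes essentially the same route as the paper: both deduce $y_1\leq y_2$ and $y_3\leq y_2$ from the proof of Theorem \ref{K2K12}, note that $y_1+y_3$ must be one of the three vertices, exclude $y_1+y_3\in\{y_1,y_3\}$ via $y_1 y_3=0$, and conclude $y_1+y_3=y_2$. Your explicit check that $y_1+y_3$ is a genuine vertex (being $\neq 0$ since it dominates $y_1$, and $\neq 1$ since it lies below $y_2$) merely spells out a step the paper leaves implicit, and your final adjacency-trapping argument is a harmless redundancy once $y_1+y_3\notin\{y_1,y_3\}$ is known.
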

\begin{proof}
As we have seen in the proof of Theorem \ref{K2K12}, we have $y_3 \leq y_2$ and $y_1 \leq y_2$. This implies that $y_3 + y_1 \leq y_2$. So, $y_3 + y_1 = y_t$ for some $1 \leq t \leq 3$. If $y_3 + y_1 = y_1$, then $y_3 \leq y_1$, which implies that $y_3$ is adjacent to $y_1$, a contradiction. In a similar way, one can see that $y_3 + y_1 \neq y_3$. So, $y_3 + y_1 = y_2$. Note that since, $y_3$ is not adjacent to $y_1$, we have $y_1  y_3 = 0$.
\end{proof}

One of the corollaries of Theorem \ref{K2K12} is the following result for semimodules. For the definition of semirings and semimodules, one can refer to the book \cite{Golan1999}.

\begin{cor}
Let $S$ be a semiring and $M$ an $S$-semimodule. If the intersection graph $G(M)$ of the $S$-subsemimodules of $M$ is a path, then either $G(M)=K_2$, or $G(M)=K_{1,2}$. Moreover, if  $G(M)=K_{1,2}$, then $G(M)$ is of the form $N_1 - N_2 - N_3$ with $N_2=N_1 + N_3$ and $N_1 \cap N_3 = (0)$, where $N_1, N_2, N_3$ are $S$-subsemimodules of $M$.
\end{cor}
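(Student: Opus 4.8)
The plan is to recognize that the intersection graph $G(M)$ is nothing but the graph $G(L)$ of a suitable bounded \emph{lattice} $L$, so that the conclusion drops straight out of the preceding corollary. Concretely, I would take $L = \Sub_S(M)$, the collection of all $S$-subsemimodules of $M$, and equip it with the two operations of sum (as the join $+$) and intersection (as the meet $\cdot$). The zero subsemimodule $(0)$ is the least element and $M$ itself is the greatest, so $(L,+,\cdot,(0),M)$ is a bounded lattice.

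The only point meriting a word of verification is that these operations really make $\Sub_S(M)$ a lattice. That $(\Sub_S(M),\cap)$ is a bounded meet-semilattice was already recorded in Remark \ref{IntersectionGraphEx}(3); what remains is to note that for subsemimodules $N_1,N_2$ the sum $N_1+N_2=\{n_1+n_2 : n_i\in N_i\}$ is again a subsemimodule and is the least one containing both, so it serves as the join. This is routine from the definition of a subsemimodule and needs no extra hypotheses on $S$ or $M$.

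With this identification in hand, Definition \ref{IntersectionGraphSemiLattice} shows that $G(L)$ has as vertices all subsemimodules of $M$ other than $(0)$ and $M$, two of them being adjacent precisely when their intersection is nonzero; thus $G(L)$ is exactly the intersection graph $G(M)$. Applying the preceding corollary (the bounded-lattice version of Theorem \ref{K2K12}) to $L=\Sub_S(M)$ then yields at once that a path $G(M)$ must be either $K_2$ or $K_{1,2}$, and in the latter case that $G(M)$ takes the form $N_1 - N_2 - N_3$ with $N_2 = N_1 + N_3$ and $N_1 \cap N_3 = (0)$, which is precisely the desired statement.

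I do not expect a genuine obstacle here: the entire content lies in the dictionary between the lattice-theoretic vocabulary (join, meet, $0$, $1$) and the module-theoretic one (sum, intersection, $(0)$, $M$). If anything calls for care, it is merely confirming that the join in $\Sub_S(M)$ is the subsemimodule sum rather than the set-theoretic union, so that the equation $y_2 = y_1 + y_3$ supplied by the bounded-lattice corollary is correctly read as $N_2 = N_1 + N_3$.
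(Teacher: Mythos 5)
Your proposal is correct and matches the paper's intent exactly: the paper states this corollary without proof, presenting it as an immediate specialization of the preceding bounded-lattice corollary to the bounded lattice $(\Sub_S(M), +, \cap, (0), M)$, which is precisely the identification you carry out. Your verification that subsemimodule sum is the join (and not set-theoretic union) is the right point to check, and nothing further is needed.
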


Let us recall that in a commutative semigroup $S$ with zero, $s\in S$ is a zero-divisor if there is a nonzero $t\in S$ such that $st =0$.

\begin{prop}
	
	\label{Complete}
	
	Let $S$ be a bounded semilattice with more than two elements. Then $G(S)$ is complete if and only if $S$ has no zero-divisors other than 0.
	
	\begin{proof}
		Straightforward.
	\end{proof}
	
\end{prop}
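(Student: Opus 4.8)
The plan is to translate both sides of the equivalence into elementary statements about the meet operation and then observe that, modulo the handling of the two excluded elements $0$ and $1$, they say the same thing. Recall that here $\circ$ is the meet $\wedge$, that $x \wedge y \leq x$ and $x \wedge y \leq y$, and that the identities $x \wedge x = x$, $1 \wedge x = x$, and $0 \wedge x = 0$ hold for every $x \in S$. In these terms, ``$G(S)$ is complete'' means exactly that $x \wedge y \neq 0$ for every pair of \emph{distinct} vertices $x, y \in S \setminus \{0,1\}$, while ``$S$ has no zero-divisors other than $0$'' means that $s \wedge t \neq 0$ for every pair of nonzero elements $s, t \in S$.

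For the implication ($\Leftarrow$) I would argue directly. Any two distinct vertices of $G(S)$ are in particular two nonzero elements of $S$, so the hypothesis that $S$ has no nonzero zero-divisors immediately forces their meet to be nonzero; by the adjacency rule of Definition \ref{IntersectionGraphSemiLattice} they are therefore adjacent, and since the pair was arbitrary, $G(S)$ is complete.

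For ($\Rightarrow$) I would argue contrapositively. Suppose $S$ possesses a nonzero zero-divisor, so that there are nonzero $s, t \in S$ with $s \wedge t = 0$; the only real work is to promote $s$ and $t$ to two \emph{distinct} honest vertices of $G(S)$. First, $s \neq t$, for otherwise idempotence would give $s = s \wedge s = s \wedge t = 0$, contradicting $s \neq 0$. Next, neither $s$ nor $t$ equals $1$, since $1 \wedge x = x \neq 0$ whenever $x \neq 0$. Hence $s, t \in S \setminus \{0,1\}$ are two distinct vertices of $G(S)$ whose meet is $0$, so they fail to be adjacent and $G(S)$ is not complete.

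I do not expect a genuine obstacle here: the content of the argument is simply the remark that the meet of two elements lies below each of them, so the only way two honest vertices can fail to be adjacent is precisely by exhibiting a nonzero zero-divisor. The single point deserving care is the bookkeeping around $0$ and $1$, which is where the idempotence and absorption identities, together with the hypothesis $|S| > 2$ that makes the vertex set meaningful, are used.
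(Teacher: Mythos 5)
Your proof is correct, and since the paper dismisses this proposition with the single word ``Straightforward,'' your direct unpacking of both sides of the equivalence---including the careful check that a pair of nonzero elements with zero meet must be distinct (by idempotence) and distinct from $1$ (by the neutrality of $1$), hence genuine vertices---is precisely the routine argument the authors intended and left to the reader. Nothing in your write-up deviates from or adds beyond that intended route, so there is nothing further to compare.
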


\begin{dfn}

\label{Artiniandef}

A bounded semilattice $S$ is Artinian, if any decreasing chain $$s_1 \geq s_2 \geq \cdots s_n \geq s_{n+1} \geq \cdots$$ in $S$ is stationary, i.e. there is an $n\in \mathbb N$ such that $s_i = s_{i+1}$, for all $i \geq n$.
\hfill $\diamond$ 
\end{dfn}

\begin{cor}

\label{Artinianprop}

Let $S$ be an Artinian bounded semilattice with more than two elements. Then, $G(S)$ is complete if and only if $|\Atom(S)| = 1$.
\end{cor}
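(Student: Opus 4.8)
The plan is to route everything through Proposition~\ref{Complete}, which already tells us that, for a bounded semilattice with more than two elements, $G(S)$ is complete precisely when $S$ has no zero-divisor other than $0$. The Artinian hypothesis should enter at exactly one point: to guarantee that atoms exist and, more usefully, that every vertex lies above one. So first I would prove the auxiliary fact that in an Artinian bounded semilattice $S$, every vertex $x$ of $G(S)$ (i.e. every $x$ with $0 < x < 1$) satisfies $a \le x$ for some $a \in \Atom(S)$.

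To prove this, fix a vertex $x$ and consider $D = \{\, z \in S : 0 < z \le x \,\}$, which is nonempty since $x \in D$. Were $D$ to have no minimal element, one could extract an infinite strictly decreasing chain inside $D$, contradicting Definition~\ref{Artiniandef}; hence $D$ has a minimal element $a$. From $0 < a \le x < 1$ we get $0 < a < 1$, and minimality gives that $0 \le y \le a$ forces $y \in \{0, a\}$, so $a \in \Atom(S)$ with $a \le x$. Note also that $|S| > 2$ forces $0 \ne 1$, so $S - \{0,1\} \ne \emptyset$ and a vertex exists; combined with the above, $\Atom(S) \ne \emptyset$.

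For the direction $(\Rightarrow)$ I would observe that completeness already forces at most one atom, with no appeal to the Artinian condition: if $a_1, a_2$ were distinct atoms, then $a_1 a_2 \le a_1$ together with $a_1 \in \Atom(S)$ gives $a_1 a_2 \in \{0, a_1\}$, and $a_1 a_2 = a_1$ would yield $a_1 \le a_2$ and hence $a_1 = a_2$, a contradiction; therefore $a_1 a_2 = 0$. Since $a_1, a_2$ are distinct vertices, this means they are non-adjacent, contradicting completeness. Together with $\Atom(S) \ne \emptyset$ this gives $|\Atom(S)| = 1$.

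For $(\Leftarrow)$, write $\Atom(S) = \{a\}$. By the auxiliary fact every vertex lies above $a$, so for any two distinct vertices $x, y$ we have $a \le x$ and $a \le y$, whence $a \le xy$ and thus $xy \ne 0$; so $x$ and $y$ are adjacent and $G(S)$ is complete. (One could instead verify that $S$ has no nonzero zero-divisor and quote Proposition~\ref{Complete}.) The only genuine obstacle is the auxiliary fact, i.e. correctly extracting a minimal element from the descending chain condition and checking the candidate is an atom; the rest is formal, the key structural point being simply that distinct atoms always meet in $0$ while a single atom sits under everything.
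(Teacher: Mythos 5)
Your proposal is correct and takes essentially the same route as the paper: for $(\Rightarrow)$ two distinct atoms meet in $0$ and hence are non-adjacent, and for $(\Leftarrow)$ the unique atom lies below every vertex, so all meets of vertices are nonzero and $G(S)$ is complete (via Proposition~\ref{Complete}). The only difference is that you make explicit, through the minimal-element consequence of the descending chain condition, the step the paper compresses into ``Clearly, $m \leq x$, for all $x \in S-\{0,1\}$,'' which is indeed the one place the Artinian hypothesis is used.
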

\begin{proof}
Let $S$ be an Artinian bounded semilattice with more than two elements. Clearly, $\Atom(S) \neq \emptyset$.

($\Leftarrow$): Let $|\Atom(S)| = 1$ and $m$ be the unique element of $\Atom(S)$. Clearly, $m \leq x$, for all $x\in S-\{0,1\}$. This implies that if $x,y \in S-\{0,1\}$, then $xy \geq m$. So, $xy\neq 0$, for all $x,y \in S-\{0,1\}$, which means that $G(S)$ has no zero-divisor other than 0 and therefore, by Proposition \ref{Complete}, it is complete.

($\Rightarrow$): If $m_1$ and $m_2$ are two distinct elements of $\Atom(S)$, then $m_1 m_2 = 0$. So, $S$ has some zero-divisors other than 0. Hence, by Proposition \ref{Complete}, $G(S)$ is not complete.
\end{proof}

Let us recall that if $S$ is a poset, then the length of $S$, denoted by $l(S)$, is defined as $l(S) = \sup \{ |C|-1: C \text{ is a chain of $S$}\}$ \cite[p. 54]{Blyth2005}.

A graph is said to be planar if it can be drawn in the plane so that its edges intersect only at their ends. Kuratowski's Theorem in graph theory states that a graph is planar if and only if it contains no subdivision of either $K_5$ or $K_{3,3}$ \cite[Theorem 10.30]{BondyMurty2008}.

\begin{prop}
\label{planarchain}
 Let $S$ be a bounded semilattice. If $G(S)$ is a planar graph, then $l(S) \leq 5$.
 \end{prop}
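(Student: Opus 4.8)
The plan is to prove the contrapositive: if $l(S) \geq 6$, then $G(S)$ is not planar. The engine of the argument is a single observation linking chains in the poset $S$ to cliques in the graph $G(S)$. Suppose $x$ and $y$ are two distinct vertices of $G(S)$ (so $x, y \in S - \{0,1\}$) that are comparable in the induced order, say $x < y$. By the definition of the order on a semilattice recalled in the introduction, $x < y$ means $x \wedge y = x$; since $x \neq 0$, we get $x \wedge y = x \neq 0$, so $x$ and $y$ are adjacent. Hence any chain contained in $S - \{0,1\}$ induces a \emph{complete} subgraph of $G(S)$: if $c_1 < c_2 < \cdots < c_m$ are nonzero, non-one elements, then every pair $c_i, c_j$ is adjacent, giving a copy of the complete graph $K_m$.

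With this in hand the counting is quick. If $l(S) \geq 6$, then by definition of length there is a chain $C$ in $S$ with $|C| \geq 7$. At most two elements of $C$ are excluded from the vertex set of $G(S)$, namely $0$ and $1$, so $C \cap (S - \{0,1\})$ is a chain of at least five vertices. By the observation above these five vertices induce a $K_5$ in $G(S)$, and since $K_5$ is nonplanar (it is one of the two forbidden configurations in Kuratowski's Theorem, already cited), $G(S)$ cannot be planar. Taking the contrapositive yields that planarity of $G(S)$ forces every chain in $S-\{0,1\}$ to have at most four elements; re-inserting $0$ and $1$ shows every chain of $S$ has at most six elements, i.e. $l(S) \leq 5$.

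The only place demanding care is the bookkeeping for the two boundary elements $0$ and $1$, which are present in $S$ but absent from the vertex set of $G(S)$; the bound $l(S) \leq 5$ (rather than $l(S)\leq 3$) arises precisely because a longest chain may pass through both $0$ and $1$, padding the chain by two elements that contribute to $l(S)$ but not to the clique. Beyond that the proof is essentially immediate: the substantive content is the chain-to-clique observation, and everything else is the standard fact that $K_5$ is nonplanar. I anticipate no real obstacle, though one should state the comparability-implies-adjacency step explicitly, since it is the hinge on which the whole argument turns.
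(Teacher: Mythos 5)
Your proof is correct and follows the same route as the paper's: exhibit a chain of five vertices in $S-\{0,1\}$, note that comparability of nonzero elements forces adjacency, and conclude that these vertices induce a $K_5$, contradicting planarity via Kuratowski's Theorem. The only difference is cosmetic — you spell out the comparability-implies-adjacency step and the bookkeeping for $0$ and $1$, which the paper dismisses with ``Clearly.''
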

\begin{proof}
Let $l(S) \geq 6$. So there exists a chain $0 < s_1 < s_2 < s_3 < s_4 < s_5 <1$ in $S$ such that $s_i \in  S-\{0,1\}$. Clearly, the
vertices $s_i$, where $1\leq i\leq 5$, form $K_5$ as an induced subgraph of $G(S)$, a contradiction. So, $l(S) \leq 5$ and the proof is complete.
\end{proof}

Let us recall that a walk in a graph $G$ is a sequence $v_0 e_1 v_1 \cdots v_{l-1}e_l v_l$, whose terms are alternately vertices and edges of $G$, such that $v_{i-1}$ and $v_i$ are the ends of $e_i$, $1 \leq i \leq l$. A walk in a graph is closed if its initial and terminal vertices are identical. A tour of a connected graph $G$ is a closed walk that traverses each edge of $G$ at
least once, and a Euler tour one that traverses each edge exactly once. A graph is Eulerian if it admits a Euler tour (see Sections 3.1 and 3.3 in \cite{BondyMurty2008}). A graph in which each vertex has even degree is called an even graph. A connected graph is Eulerian if and only if it is even \cite[Theorem 3.5]{BondyMurty2008}.

\begin{lem}

\label{Euler1}

Let $S$ be a finite bounded chain (semilattice) with more than two elements. Then $G(S)$ is a complete graph. Moreover, $G(S)$ is Eulerian if and only if $l(S)$ is an even number.
\end{lem}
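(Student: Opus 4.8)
The plan is to establish the two assertions in sequence. First I would show that $G(S)$ is complete. Since $S$ is a finite bounded chain with more than two elements, write its elements as $0 = c_0 < c_1 < \cdots < c_n < c_{n+1} = 1$, so that the vertex set of $G(S)$ is $\{c_1, \ldots, c_n\}$. For any two distinct vertices $c_i$ and $c_j$ with $i < j$, we have $c_i \wedge c_j = c_i$ (this is exactly how the order is induced on a semilattice, as recalled in the excerpt: $x \leq y$ iff $x \circ y = x$). Since $c_i > 0$, we get $c_i \wedge c_j = c_i \neq 0$, so $c_i$ and $c_j$ are adjacent. As this holds for every pair of distinct vertices, $G(S)$ is complete. (Equivalently, one may invoke Proposition \ref{Complete}: a chain with smallest nonzero element $c_1$ has $c_1 \leq x$ for every vertex $x$, so products of nonzero elements are nonzero, hence there are no zero-divisors other than $0$.)

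For the second claim I would count degrees. Here $l(S) = |S| - 1 = n + 1$, and the number of vertices of $G(S)$ is $n = |S| - 2$. Because $G(S)$ is the complete graph $K_n$, every vertex has degree $n - 1$. By the criterion recalled just before the lemma (a connected graph is Eulerian iff it is even, \cite[Theorem 3.5]{BondyMurty2008}), $G(S)$ is Eulerian exactly when $n - 1$ is even, i.e. when $n$ is odd. I would then translate this back into a condition on $l(S)$: since $l(S) = n + 1$, the number $n$ is odd precisely when $n + 1 = l(S)$ is even. Hence $G(S)$ is Eulerian if and only if $l(S)$ is even, as required.

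I do not anticipate a serious obstacle here; the main point requiring care is the bookkeeping between $|S|$, the number of vertices $n$, and the length $l(S)$, together with remembering that completeness (and hence connectedness) of $G(S)$ is what licenses the use of the "even $\Leftrightarrow$ Eulerian" characterization. One should also note in passing that $G(S) = K_n$ is automatically connected for $n \geq 1$, so the hypothesis of that characterization is met.
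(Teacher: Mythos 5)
Your proposal is correct and follows essentially the same route as the paper's proof: list the chain as $0 < s_1 < \cdots < s_t < 1$, observe that the meet of two distinct vertices is the smaller of the two and hence nonzero (so $G(S)=K_t$ with every degree $t-1$), and then apply the even-degree characterization of Eulerian graphs to conclude that $G(S)$ is Eulerian iff $l(S)=t+1$ is even. Your only additions are the explicit check of connectedness and the alternative appeal to Proposition~\ref{Complete}, both harmless refinements of the same argument.
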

\begin{proof}
 Let $l(S) = t+1$ and set $S=\{0,s_1,\ldots,s_t,1\}$ such that $0 < s_1 < \cdots < s_t <1$. It is clear that $s_i s_j = s_{\min\{i,j\}} \neq 0$. So, $G(S)$ is the complete graph $K_t$ and $\deg(s_i) = t-1$, for each $i$. Therefore, $G(S)$ is Eulerian if and only if $l(S)=t+1$ is even.
\end{proof}

It is easy to verify that if $\{S_i\}$ is a family of bounded semilattices, then $S = \prod_i S_i$ is also a bounded semilattice, where its operation is defined componentwise and $1_S = (1_{S_i})$ and $0_S = (0_{S_i})$.

\begin{thm}

\label{Euler2}

Let $n\geq 3$ and $\{S_i\}^n_{i=1}$ be a family of bounded semilattices. If each $S_i$ is a finite chain and $S= \prod^n_{i=1} S_i$, then $G(S)$ is Eulerian if and only if either (a) the length $l(S_i)$ of $S_i$ is even, for all $1 \leq i \leq n$ or (b) each $S_i$ has two elements.

\end{thm}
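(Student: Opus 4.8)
The plan is to recast adjacency in terms of supports, count degrees, and then invoke the criterion that a connected graph is Eulerian precisely when every vertex has even degree. Throughout I write $m_i = |S_i|$, so that $l(S_i) = m_i - 1$; thus hypothesis (a) reads ``$m_i$ is odd for every $i$'' and hypothesis (b) reads ``$m_i = 2$ for every $i$''. For a vertex $x = (x_1,\dots,x_n)$ of $G(S)$ set $\mathrm{supp}(x) = \{\, i : x_i \neq 0 \,\}$. Since each $S_i$ is a chain, $(x\wedge y)_i = \min\{x_i,y_i\}$, so $x \wedge y = 0$ iff $\mathrm{supp}(x)\cap\mathrm{supp}(y) = \emptyset$. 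Hence $x,y$ are adjacent iff $x\neq y$ and their supports meet.

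Next I would compute the degree of a vertex $x$ with $A = \mathrm{supp}(x)\neq\emptyset$. The elements of $S$ vanishing on $A$ are exactly those whose support misses $A$, and there are $\prod_{i\notin A} m_i$ of them; among these only $0$ fails to be a vertex, while neither $1$ nor $x$ lies among them. Therefore
\[
\deg(x) = (|V|-1) - \Big(\textstyle\prod_{i\notin A} m_i - 1\Big) = \prod_i m_i - \prod_{i\notin A} m_i - 2 .
\]
Writing $E = \{\, i : m_i \text{ even} \,\}$ and factoring $\prod_i m_i - \prod_{i\notin A} m_i = \big(\prod_{i\notin A} m_i\big)\big(\prod_{i\in A} m_i - 1\big)$, one sees that $\deg(x)$ is odd precisely when every $m_j$ with $j\notin A$ is odd and some $m_i$ with $i\in A$ is even, i.e. precisely when $\emptyset \neq E \subseteq A$.

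I would then determine when $G(S)$ is even. A set $A$ is the support of a vertex iff $A$ is a nonempty proper subset of $\{1,\dots,n\}$, or $A = \{1,\dots,n\}$ and some $m_i\geq 3$ (the only full-support element when all $m_i=2$ being $1$ itself). If all $m_i$ are odd then $E=\emptyset$ and no vertex has odd degree, which is case (a). If some $m_i$ is even, then $E\neq\emptyset$ and I would produce an odd-degree vertex unless we are in case (b): when $E$ is proper it is itself a realizable support, and taking $A=E$ gives $\emptyset\neq E\subseteq A$, so $\deg$ is odd there; when $E=\{1,\dots,n\}$ and some $m_i\geq 3$ the full support is realizable and again gives odd degree; the sole surviving possibility, in which no realizable support contains $E$, is $E=\{1,\dots,n\}$ with every $m_i=2$, namely case (b). This proves $G(S)$ is even iff (a) or (b) holds, which already yields the ``only if'' direction.

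It remains to establish connectivity in cases (a) and (b). In case (a) every $m_i\geq 3$, so choosing each coordinate to be the least nonzero element of $S_i$ yields a full-support vertex $w\neq 1$; as $\mathrm{supp}(w)$ meets every support, $w$ is adjacent to all other vertices and $G(S)$ is connected. In case (b) the vertices are the proper nonempty subsets of $\{1,\dots,n\}$, and two disjoint such sets $A,B$ are joined by the path $A - \{a,b\} - B$ for any $a\in A$, $b\in B$, which is legitimate because $n\geq 3$ forces $\{a,b\}$ to be a proper subset. I expect the main obstacle to be the bookkeeping of which supports are actually realized: the borderline existence of the full support is exactly what separates case (b) from the non-Eulerian situation where all $m_i$ are even but some $m_i\geq 4$, and the absence of a universal vertex in case (b) is what forces the hypothesis $n\geq 3$ in the connectivity argument.
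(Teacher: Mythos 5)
Your proposal is correct, and at its core it is the same degree-counting argument as the paper's: your formula $\deg(x)=\prod_i m_i-\prod_{i\notin A}m_i-2$ is exactly the paper's $\deg(x)=\prod_{i=1}^n(l_i+1)-\prod_{i=1}^n(l_i+1)^{\delta_i(x)}-2$ under $m_i=l_i+1$, with your support set $A$ playing the role of the indicator $\delta_i$. Where you genuinely diverge is in the organization of the two directions, and in one place you are more complete than the paper. For the ``only if'' direction the paper produces two ad hoc odd-degree vertices (one for the mixed-parity case, one for the all-lengths-odd case with some $l_i\geq 3$, where $\deg(y)=\prod_i(l_i+1)-3$); your factorization $\prod_i m_i-\prod_{i\notin A}m_i=\bigl(\prod_{i\notin A}m_i\bigr)\bigl(\prod_{i\in A}m_i-1\bigr)$ yields the single clean criterion that $\deg(x)$ is odd iff $\emptyset\neq E\subseteq A$, and combined with your classification of realizable supports (the full support exists iff some $m_i\geq 3$) this characterizes evenness of $G(S)$ as exactly (a) or (b) in one stroke, correctly isolating case (b) as the borderline where the dangerous support is not realized. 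More substantively, the Euler criterion cited by both you and the paper (\cite[Theorem 3.5]{BondyMurty2008}) applies to \emph{connected} graphs, and the paper passes from ``every degree is even'' to ``Eulerian'' without ever verifying connectivity; you supply this missing step, via the dominating full-support vertex built from atoms in case (a) and the two-step paths $A-\{a,b\}-B$ in case (b), where your observation that $n\geq 3$ makes $\{a,b\}$ a proper subset is precisely the point at which the hypothesis $n\geq 3$ enters. The only tacit assumption you share with the paper is that each chain is nontrivial ($m_i\geq 2$, so $m_i$ odd forces $m_i\geq 3$ in case (a)), which is harmless given how the statement is framed.
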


\begin{proof}

Let $x=(x_1, \ldots, x_n) \in S-\{0_S,1_S\}$. Define $\delta_i : S \longrightarrow \{0,1\}$ as follows:

$$\delta_i(x)=\left\{
          \begin{array}{ll}
            1, & \hbox{ $x_i= 0$;} \\
            0, & \hbox{ $x_i\neq 0$.}
          \end{array}
        \right.
$$

Therefore, the number of elements $y \in S-\{0_S, 1_S\}$ such that $x y=0$ is $$\prod_{i=1}^n(l_i+1)^{\delta_i(x)}-1,$$ where $l_i = l(S_i)$. Also, the number of vertices of the graph $G(S)$ is $$\prod^n_{i=1}(l_i +1) -2.$$ Now, since the vertex $x$ is not adjacent to $x$, $$\deg(x)=\prod_{i=1}^n(l_i+1)-\prod_{i=1}^n(l_i+1)^{\delta_i(x)}-2.$$

$(\Leftarrow)$ Proof of (b): If each $S_i$ has two elements and $x=(x_1, \ldots, x_n) \in S-\{0_S,1_S\}$, then some of the $x_i$s are 0, while the rest of the $x_i$s are 1. Now, if we set $B=\{ i: x_i =0\}$, then $1 \leq |B| < n$, and $\deg(x) = 2^n - 2^{|B|} -2$, which is clearly an even number. Therefore, $G(S)$ is Eulerian.

Proof of (a): Since $x \neq 0_S$, by symmetry, we can imagine $\delta_i = \cdots = \delta_n = 0$, for some $1\leq i \leq n$. Now, if each $l_i$ is even, then $\prod_{i=1}^n(l_i+1)$ and $\prod_{i=1}^n(l_i+1)^{\delta_i(x)}$ are both odd numbers and therefore, $\deg(x)$ is even, for each vertex $x$. This implies that $G(S)$ is Eulerian.

$(\Rightarrow)$ Now, suppose that one of the numbers $\{l_i=l_i(S_i)\}$ is odd and at the same time one of the numbers $\{l_i=l_i(S_i)\}$ is even. We define $x=(x_1,\ldots,x_n)$, where $x_i = 1$ if and only if $l_i$ is odd and $x_i = 0$ if and only if $l_i$ is even. Clearly, $\prod_{i=1}^n(l_i+1)$ is an even number, while $\prod_{i=1}^n(l_i+1)^{\delta_i(x)}$ is odd, because it is the multiplication of odd numbers. So, $\deg(x)$ is odd and $G(S)$ cannot be a Eulerian graph.

If all the numbers $l_i$ are odd and for some $i$ we have $l_i \geq 3$, then we define $y=(y_1,\ldots,y_n)$, where $y_i = 1_{S_i}$, if and only if $l_i = 1$ and $y_i$ is the unique element in $\DAtom(S_i)$ if and only if $l_i \geq 3$. Definitely, $y \neq 1_S$ and $\delta_i(y) = 0$, for each $i$. Also, $\deg(y)=  \prod_{i=1}^n(l_i+1)-3$, which is, clearly, an odd number. Therefore, $G(S)$ cannot be again a Eulerian graph and the proof is complete.
\end{proof}

In Lemma \ref{Euler1}, we proved that if $S$ is a finite bounded chain (semilattice) with more than two elements, then $G(S)$ is a complete graph. Now, we show that for a direct product of bounded chains, this is not the case.

\begin{prop}
Let $n\geq 2$ and $\{S_i\}^n_{i=1}$ be a family of bounded semilattices. If each $S_i$ is a finite chain and $S= \prod^n_{i=1} S_i$, then $G(S)$ cannot be a complete graph.
\end{prop}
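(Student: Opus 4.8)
The plan is to show that a product $S=\prod_{i=1}^n S_i$ of finitely many finite chains (with $n\geq 2$) always has at least two distinct atoms, which by Proposition \ref{Complete} forces $G(S)$ to be non-complete. Recall that in each chain $S_i$ we have $0_{S_i} < s^{(i)}_1 < \cdots < s^{(i)}_{t_i} < 1_{S_i}$, where I write $a_i := s^{(i)}_1$ for the smallest nonzero element of $S_i$; note each $S_i$ has more than one element since it is bounded, and I may take $t_i \geq 0$. The componentwise order on $S$ makes the least nonzero-in-one-coordinate elements the atoms of $S$.

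Concretely, first I would exhibit two explicit distinct atoms of $S$. For $1\leq j\leq n$ define $e_j \in S$ by setting the $j$-th coordinate equal to $a_j$ (the smallest nonzero element of $S_j$) and all other coordinates equal to $0$. I claim each $e_j$ is an atom: it is nonzero and not equal to $1_S$, and if $0_S \leq y \leq e_j$ then $y$ has $0$ in every coordinate except possibly the $j$-th, where $y_j \leq a_j$ forces $y_j \in \{0_{S_j}, a_j\}$ since $a_j$ is minimal nonzero in the chain $S_j$; hence $y = 0_S$ or $y = e_j$. Since $n\geq 2$, the elements $e_1$ and $e_2$ are two distinct atoms of $S$.

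Finally I would compute $e_1 \wedge e_2$. Because the semilattice operation on $S$ is componentwise, the $j$-th coordinate of $e_1\wedge e_2$ is $0_{S_j}$ for every $j$ (in coordinate $1$ it is $a_1\wedge 0 = 0$, in coordinate $2$ it is $0\wedge a_2 = 0$, and elsewhere it is $0\wedge 0 = 0$). Thus $e_1 \wedge e_2 = 0_S$, so $e_1$ and $e_2$ are distinct nonzero elements whose product is $0$; that is, $S$ has a zero-divisor other than $0$. Note $S$ has more than two elements, since $\prod_{i=1}^n |S_i| \geq 2^n \geq 4$, so Proposition \ref{Complete} applies and yields that $G(S)$ is not complete.

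I do not anticipate a serious obstacle here; the argument is essentially the observation that a product of two or more (nontrivial) chains has incomparable minimal elements, which are orthogonal atoms. The only care needed is bookkeeping to guarantee the two atoms are genuinely distinct (which uses $n\geq 2$) and that $S$ has enough elements to invoke Proposition \ref{Complete}; both are immediate. An alternative, equally short route would be to invoke Corollary \ref{Artinianprop}: a finite product of finite chains is Artinian with more than two elements, and since it has at least two atoms $e_1,e_2$, Corollary \ref{Artinianprop} directly gives that $G(S)$ is not complete.
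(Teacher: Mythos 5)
Your proof is correct, and it takes a genuinely different route from the paper's. The paper obtains the proposition as a byproduct of the counting set up in the proof of Theorem \ref{Euler2}: for a vertex $x$, the number of vertices $y$ with $xy=0_S$ equals $\prod_{i=1}^n (l_i+1)^{\delta_i(x)} - 1$, and completeness would force this to vanish for \emph{every} vertex, i.e.\ $\delta_i(x)=0$ for all $i$ and all vertices $x$, which fails because (for $n\geq 2$ and nontrivial chains) there exist vertices with a zero coordinate. You instead construct two explicit orthogonal atoms $e_1,e_2$ with $e_1\wedge e_2=0_S$ and invoke the zero-divisor criterion of Proposition \ref{Complete} (or, in your alternative ending, Corollary \ref{Artinianprop}); this is more elementary and self-contained, uses none of the annihilator count, and even sidesteps a loosely worded step in the paper's version (the assertion there that $\delta_i(x)=0$ for all $i$ happens only for $x=1_S$ is literally true only when every chain has length one; what the argument really needs is that completeness would rule out vertices with a zero coordinate, and such vertices exist). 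What the paper's route buys is economy, since the formula is already on the table from Theorem \ref{Euler2}, together with strictly more information, namely the exact number of non-neighbors of each vertex. One small caveat for your write-up: ``each $S_i$ has more than one element since it is bounded'' is not actually a justification, because a one-element semilattice is bounded with $0=1$; nontriviality of the factors is an implicit hypothesis of the proposition (without it, e.g.\ $n=2$ with one trivial factor makes $S$ a chain, where $G(S)$ is complete by Lemma \ref{Euler1}), and the paper's proof tacitly assumes it as well.
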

\begin{proof}
Let $x=(x_1, \ldots, x_n) \in S-\{0_S,1_S\}$. Clearly, if $G(S)$ is a complete graph, then the number of elements $y \in S-\{0_S, 1_S\}$ such that $x y=0$ must be zero. Therefore, according to the proof of Theorem \ref{Euler2}, we have $$\prod_{i=1}^n(l_i+1)^{\delta_i(x)}-1 = 0.$$ Obviously, this implies that $\delta_i(x)=0$, for each $i$ and this happens only $x=1_S$, a contradiction. Therefore, $G(S)$ cannot be a complete graph and the proof is complete.
\end{proof}

Let us recall that a tree is an undirected graph in which any two vertices are connected by exactly one path \cite[Theorem 1.5.1]{Diestel2017}. A complete bipartite graph is a graph where every vertex of the first set is connected to every vertex of the second set and if one of the sets has exactly one element, it is called a star graph \cite[p. 18]{Diestel2017}.

\begin{thm}

\label{TreeStar}

Let $S$ be a bounded semilattice. Then, the graph $G(S)$ is a tree if and only if it is a star graph.

\begin{proof}
We just need to prove that if $G(S)$ is a tree, then $G(S)$ is a star graph. On contrary, let $G(S)$ be a tree such that it is not a star graph. So, $G(S)$  has a path of length 3, say of the form $y_1-y_2-y_3-y_4$ with $\deg (y_1)=1$. Since $\deg (y_1)=1$, by Proposition \ref{minimax}, either $y_1$ is in $\Atom(S)$ or $\DAtom(S)$.

Firstly, suppose that $y_1 \in \Atom(S)$. Since vertices $y_1$ and $y_2$ are adjacent, we have $y_1 y_2\neq 0$. Clearly, $y_1 y_2 \neq1$. Our claim is that $y_1 y_2 = y_1$. If $y_1 y_2 = y_2$, then $y_2 \leq y_1$ and since $y_1$ is an atom and $y_2$ is nonzero, $y_2 = y_1$, a contradiction. Now, let $y_1y_2 = s$ such that $s \in S-\{y_1,y_2\}$. In this case, $s$ is adjacent to the both vertices $y_1$ and $y_2$ and this is impossible, since $G(S)$ is a tree and any two vertices of a tree are connected by exactly one path \cite[Theorem 1.5.1]{Diestel2017}. Therefore, $y_1 y_2 = y_1$ and so, $y_1\leq y_2$.

By assumption, $y_2$ and $y_3$ are adjacent. So, $y_2 y_3 \neq 0$. Also, $y_2 y_3 \neq 1$. Now, we prove that $y_2 y_3 = y_3$.

If $y_2 y_3= y_2$, then $y_2\leq y_3$. So, we obtain that $y_1$ and $y_3$ are adjacent, a contradiction. On the other hand, if there is an element $s\in S-\{0,1\}$ such that $s$ is different from $y_2$, and $y_3$ and $y_2 y_3= s$, then $s$ is adjacent to the both vertices $y_2$ and $y_3$, again a contradiction. Therefore, $y_2 y_3= y_3$ and this implies that $y_3\leq y_2$. So, $y_2 y_4 \geq y_3y_4 \neq 0$ and this means that $y_2$ and $y_4$ are adjacent, a contradiction.

Now, suppose that $y_1 \in \DAtom(S)$. Similar to the proof in above, we can show that $y_1 y_2 = y_2$ and so, $y_2\leq y_1$. Obviously, $y_2 y_3 \neq 0,1$. On the other hand, it can be similarly proved that if $y_2y_3 = s$ for some $s\in S-\{y_2, y_3\}$, then again $G(S)$ cannot be a tree. If $y_2 y_3 = y_2$, then $y_2 \leq y_3$ and this implies that $y_1$ is adjacent to $y_3$, a contradiction. Also, if $y_2 y_3 = y_3$, then $y_3 \leq y_2$ and in this case, $y_2$ is adjacent to $y_4$, again a contradiction. Hence, if $G(S)$ is a tree, then it is a star graph and the proof is complete.
\end{proof}
\end{thm}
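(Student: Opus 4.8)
The reverse implication is immediate, since every star graph is by definition a tree, so the whole content lies in showing that a tree $G(S)$ must be a star. The plan is to argue by contradiction. A tree is a star precisely when its diameter is at most $2$, equivalently when it contains no path on four vertices; hence if $G(S)$ is a tree that is not a star, it contains a path $y_1 - y_2 - y_3 - y_4$, and by choosing this to be the initial segment of a longest path I may assume that $y_1$ is an endpoint of a maximal path, so that $\deg(y_1) = 1$. By Proposition \ref{minimax}, $y_1$ then lies in $\Atom(S)$ or in $\DAtom(S)$, which splits the argument into two symmetric cases.

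The engine driving both cases is a single observation about products, which I would isolate first. Since $xy \leq x$ and $xy \leq y$ for all $x,y$, the meet of two adjacent vertices is a lower bound of each. Now suppose $y_i y_j = s$ for some $s \in S - \{0,1\}$ distinct from $y_i$ and $y_j$, where $y_i,y_j$ are consecutive on the path. Then $s \leq y_i$ and $s \leq y_j$, so $s y_i = s \neq 0$ and $s y_j = s \neq 0$; thus $s$ is adjacent to both $y_i$ and $y_j$. Since $y_i$ and $y_j$ are themselves adjacent, the three vertices $y_i, y_j, s$ form a triangle, which is impossible in a tree. This rules out every ``third element'' value for the products $y_1 y_2$ and $y_2 y_3$ and forces each of them to equal one of its two factors, i.e. to record a comparability between consecutive vertices.

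With this in hand I would treat the atom case first. Because $y_1$ is an atom and $y_1 y_2 \leq y_1$ is nonzero, the only elements below $y_1$ being $0$ and $y_1$, I get $y_1 y_2 = y_1$, i.e. $y_1 \leq y_2$. Applying the dichotomy to $\{y_2,y_3\}$: the value $y_2 y_3 = y_2$ would give $y_2 \leq y_3$ and hence $y_1 \leq y_3$, so $y_1 y_3 = y_1 \neq 0$ and $y_1, y_3$ would be adjacent (a forbidden chord); thus necessarily $y_2 y_3 = y_3$, i.e. $y_3 \leq y_2$. But then $y_2 y_4 \geq y_3 y_4 \neq 0$, so $y_2$ and $y_4$ are adjacent, another forbidden chord, and the contradiction closes this case. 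The dual-atom case runs symmetrically: maximality of $y_1$ forces $y_1 y_2 = y_2$, so $y_2 \leq y_1$; the dichotomy on $\{y_2,y_3\}$ then yields either $y_2 \leq y_3$, whence $y_2 \leq y_1 y_3$ makes $y_1,y_3$ adjacent, or $y_3 \leq y_2$, whence $y_2 y_4 \geq y_3 y_4 \neq 0$ makes $y_2,y_4$ adjacent, both impossible in a tree.

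The main obstacle, and really the only delicate point, is systematically excluding the possibility that a product $y_i y_j$ equals a vertex lying off the path; the triangle argument of the second paragraph is exactly what accomplishes this, and it is the single place where the tree hypothesis — acyclicity, equivalently the uniqueness of paths — is indispensable. Once that is in place, the remainder is a short order-chase comparing the vertices along the path and producing a chord between two non-consecutive ones.
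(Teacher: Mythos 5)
Your proof is correct and follows essentially the same route as the paper's: the same reduction to a path $y_1-y_2-y_3-y_4$ with $\deg(y_1)=1$, the same case split via Proposition \ref{minimax}, the same unique-path/triangle argument excluding a third-element value for $y_iy_j$, and the same order-chase producing the forbidden chords $y_1y_3$ and $y_2y_4$. Your only (welcome) refinements are cosmetic: you justify the existence of the degree-one endpoint via a longest path, isolate the triangle lemma up front, and in the atom case obtain $y_1y_2=y_1$ directly from atomicity rather than invoking the tree property for that step.
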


We end this section by a criterion for $G(L)$ being connected, where $L$ is a modular bounded lattice. Let us recall that a bounded lattice $L$ is modular if $c \leq b$ implies that $(c+a)b = c + ab$, for all $a,b,c\in L$ \cite[p. 10]{Stern1999}.

\begin{prop}

\label{modularlattice}

Let $a$ and $b$ be two distinct elements of a modular bounded lattice $L$. Then there is no path in $G(L)$ between $a$ and $b$ if and only if $ab=0$, $a+b =1$, and $a,b \in \Atom(L)$.

\begin{proof}
$(\Rightarrow)$: Assume that there is no path in $G(L)$ between $a$ and $b$. Clearly, $ab = 0$. Let $c$ be an element of $L$ such that $0 \neq c \leq b$. So, $ca=0$. On the other hand, if $c+a \neq 1$, then $a-(c+a)-b$ is a path of length 2, a contradiction. So, $c+a =1$. In particular, $a+b =1$. Now, we prove that $b\in \Atom(L)$. By modular law, $(c+a)b=c+ab$. But we have already seen that $ab =0$ and $c+a=1$. Therefore, $b=c$.
\end{proof}
\end{prop}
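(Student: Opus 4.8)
The plan is to prove both implications, with the modular law doing the essential work in each. Throughout I regard $a$ and $b$ as vertices of $G(L)$, so that $a,b\in L-\{0,1\}$.

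For the forward direction, assume there is no path between $a$ and $b$. First I would observe that $a$ and $b$ cannot be adjacent, so $ab=0$; this is immediate from the definition of $G(L)$, since otherwise the single edge $a-b$ would be a path. The heart of this direction is to show that $a+c=1$ for every element $c$ with $0<c\le b$. I would argue by contradiction: if $a+c\neq 1$, then $a+c$ is a legitimate vertex (it is nonzero since $a+c\ge a>0$), and I would check that it is adjacent both to $a$ (because $(a+c)a=a\neq 0$, while $a+c\neq a$, as $a+c=a$ would give $c\le a$, hence $c\le ab=0$) and to $b$ (because $(a+c)b\ge cb=c\neq 0$, while $a+c\neq b$, as $a+c=b$ would give $a\le b$, hence $a=ab=0$). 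This would produce the path $a-(a+c)-b$, a contradiction; hence $a+c=1$. Taking $c=b$ already yields $a+b=1$.

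Next I would upgrade the statement ``$a+c=1$ for all $0<c\le b$'' to ``$b\in\Atom(L)$'' using modularity. For any such $c$, the modular law applied to $c\le b$ gives $(c+a)b=c+ab$; substituting $c+a=1$ and $ab=0$ collapses this identity to $b=c$. Thus the only nonzero element below $b$ is $b$ itself, i.e.\ $b$ is an atom. Since the three conditions ``no path between $a$ and $b$'', $ab=0$, and $a+b=1$ are all symmetric in $a$ and $b$, the identical argument with the roles of $a$ and $b$ interchanged shows $a\in\Atom(L)$, finishing this direction.

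For the converse I would assume $ab=0$, $a+b=1$, and $a,b\in\Atom(L)$, and show that $a$ is in fact an isolated vertex, which of course rules out any path to $b$. Let $z$ be any neighbor of $a$. Then $0\neq az\le a$, and since $a$ is an atom this forces $az=a$, i.e.\ $a\le z$. Applying the modular law to $a\le z$ gives $(a+b)z=a+bz$, so $z=a+bz$. If $bz=0$ then $z=a$, contradicting $z\neq a$; hence $bz\neq 0$, and as $b$ is an atom, $bz=b$, so $b\le z$. But then $1=a+b\le z$ forces $z=1$, contradicting that $z$ is a vertex. Therefore $a$ has no neighbor at all, so no path can reach $b$. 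I expect the main obstacle to be recognizing the precise way modularity must be invoked: in the forward direction the clean cancellation $(c+a)b=c+ab=c$ is exactly what turns the join condition into the atom property, and in the converse the very same identity is what prevents $a$ from acquiring any neighbor. A secondary point requiring care is the routine but necessary check that the bridging element $a+c$ is a genuine vertex distinct from both $a$ and $b$, so that it really furnishes a path of length two.
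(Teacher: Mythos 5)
Your proposal is correct, and its forward direction follows the paper's proof essentially step for step: the same bridging vertex $a+c$ giving the path $a-(c+a)-b$ whenever $c+a\neq 1$, and the same modular-law collapse $(c+a)b=c+ab=c$ forcing $b=c$; you additionally spell out the checks the paper glosses over (that $a+c$ is a genuine vertex distinct from $a$ and $b$, and the symmetric argument yielding $a\in\Atom(L)$, which the paper leaves implicit). The real difference is the converse: the paper's proof contains only the $(\Rightarrow)$ direction and silently omits $(\Leftarrow)$, whereas you supply it in full, showing via $(a+b)z=a+bz$ that $a$ is in fact an isolated vertex. This is a genuine contribution rather than a routine verification, since the converse is \emph{not} true without modularity: in the non-modular lattice with atoms $a,b,c$ and $z=a+c$, $w=b+c$, $a+b=1$, $zw=c$, one has $ab=0$, $a+b=1$, and $a,b\in\Atom(L)$, yet $a-z-w-b$ is a path in $G(L)$. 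So your isolation argument uses modularity essentially, exactly where it must, and completes the equivalence that the paper only half-proves.
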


\begin{rmk}
Proposition \ref{modularlattice} is a special case of Theorem 2.12 in \cite{DevhareJoshiLaGrange2018}.
\hfill $\diamond$ \end{rmk}

\section{On the Diameter and Girth of the Graphs of Bounded Semilattices}\label{sec:diam}

Let us recall that the distance between two vertices in a graph is the number of edges in a shortest path connecting them. The
greatest distance between any two vertices in a graph $G$ is the diameter of $G$, denoted by $\diam(G)$ \cite[p. 8]{Diestel2017}.

\begin{prop}

\label{MinDiam}

Let $S$ be a bounded semilattice with $|S| \geq 4$ and $|\Atom(S)| = 1$. Then $G(S)$ is connected with $\diam(G(S))= 1$.

\begin{proof}

Let $m$ be the unique element of $\Atom(S)$. Therefore, for any nonzero element $s$ in $S$, we have $m \leq s$. This implies that if $s_1$ and $s_2$ are two distinct elements of $S-\{0,1\}$, then $s_1 \cdot s_2 \geq m \cdot m = m \neq 0$. So, any pair of the vertices $s_1$ and $s_2$ of $G(S)$ are connected to each other, which means that $G(S)$ is complete (connected) and $\diam(G(S))= 1$.
\end{proof}

\end{prop}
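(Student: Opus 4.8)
The plan is to prove the stronger conclusion that $G(S)$ is in fact complete, from which connectedness and $\diam(G(S)) = 1$ follow immediately. The key observation is that a unique atom acts as a common lower bound for every nonzero, non-unit element of $S$. So the first step is to name the unique atom, say $m \in \Atom(S)$, and argue that $m \leq s$ for every $s \in S - \{0,1\}$. For this I would want to lean on the Artinian-type descent available in a bounded semilattice: starting from any nonzero $s$, descend through strictly smaller nonzero elements until reaching a minimal one, which must be an atom and hence (by uniqueness) equal to $m$. This shows $m \leq s$.

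Once $m$ sits below every vertex, the edge condition is easy to verify. The second step is to take two arbitrary distinct vertices $s_1, s_2 \in S - \{0,1\}$ and compute $s_1 \wedge s_2$. Since $m \leq s_1$ and $m \leq s_2$, monotonicity of $\wedge$ gives $s_1 \wedge s_2 \geq m \wedge m = m \neq 0$. Hence $s_1 \wedge s_2 \neq 0$, so by Definition \ref{IntersectionGraphSemiLattice} the vertices $s_1$ and $s_2$ are adjacent. As this holds for every pair, $G(S)$ is complete; invoking the hypothesis $|S| \geq 4$ guarantees at least two vertices exist, so the graph is nonempty and genuinely connected with every pairwise distance equal to $1$, i.e. $\diam(G(S)) = 1$.

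The main obstacle, such as it is, lies in justifying the first step rigorously \emph{without} an Artinian hypothesis, since this proposition drops the Artinian assumption present in Corollary \ref{Artinianprop}. In a general bounded semilattice there is no guarantee that a descending chain below $s$ terminates, so the naive ``descend to a minimal element'' argument need not land at an atom. I would therefore expect the cleanest route is simply to \emph{assume} the reachability of the atom is forced by the uniqueness hypothesis together with whatever minimal structure is implicit, or else to note that the statement as written is really the expected common generalization of Proposition \ref{Complete} and Corollary \ref{Artinianprop}: the condition $|\Atom(S)| = 1$ is being used precisely to supply the missing lower bound. If one cannot descend to $m$ abstractly, the honest fix is to restate the hypothesis so that $m$ divides (lies below) every vertex, which is exactly what a unique atom gives in all the concrete examples of Remark \ref{IntersectionGraphEx}. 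Modulo that point, the argument is entirely routine: uniqueness of the atom collapses the zero-divisor obstruction in Proposition \ref{Complete}, and completeness delivers diameter one.
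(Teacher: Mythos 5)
Your proposal follows the paper's proof line for line: name the unique atom $m$, assert that $m \leq s$ for every $s \in S-\{0,1\}$, and conclude completeness from $s_1 \wedge s_2 \geq m \wedge m = m \neq 0$. The difference is that the paper asserts the first step without comment (``Therefore, for any nonzero element $s$ in $S$, we have $m \leq s$''), while you flag it as unjustified --- and your suspicion is exactly right. The implication $|\Atom(S)|=1 \Rightarrow m \leq s$ for all nonzero $s$ genuinely fails in a bounded semilattice without a chain condition, so this is a gap in the paper's own proof, and in fact the proposition is false as stated, not merely under-proved. Concretely, let $S = \{0,1,m\} \cup \{c_1, c_2, c_3, \ldots\}$ where $0 < \cdots < c_3 < c_2 < c_1 < 1$, $0 < m < 1$, $m$ is incomparable to every $c_i$, and the meet is given by $c_i \wedge c_j = c_{\max\{i,j\}}$ and $m \wedge c_i = 0$. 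This is a bounded meet-semilattice with $|S| \geq 4$; the infinite descending chain has no minimal member, so $\Atom(S) = \{m\}$. Yet $m \wedge c_i = 0$ for all $i$, so $m$ is an isolated vertex and $G(S)$ is disconnected, contradicting the conclusion of Proposition \ref{MinDiam}.

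So your assessment is the correct one: the ``descend to a minimal element'' argument is exactly what the Artinian hypothesis of Corollary \ref{Artinianprop} licenses and what Proposition \ref{MinDiam} silently drops, and the honest repair is to restore some such hypothesis --- either Artinian (in which case the result follows from Proposition \ref{Complete} as in Corollary \ref{Artinianprop}), or the weaker assumption that $S$ is atomic, i.e.\ every element of $S-\{0,1\}$ lies above an atom, which is all that step one actually uses. Granting that, the rest of your argument (monotonicity of $\wedge$ giving adjacency of every pair, and $|S| \geq 4$ supplying at least two vertices so that the diameter is exactly $1$ rather than $0$) is routine and coincides with the paper's.
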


\begin{prop}

\label{MaxDiam}

Let $S$ be a bounded semilattice with $|S| \geq 3$ and $|\DAtom(S)| = 1$. Then $G(S)$ is connected with $\diam(G(S))\leq 2$.

\end{prop}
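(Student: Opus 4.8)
The plan is to exploit the unique dual atom as a \emph{hub} through which all vertices connect. Write $\DAtom(S)=\{d\}$, so that $d$ is the unique maximal element of the poset $S-\{0,1\}$. Everything will follow from a single claim: \emph{every vertex $x$ of $G(S)$ satisfies $x\leq d$}, equivalently $xd=x$.

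Granting this claim, the argument is short. If $x$ is any vertex other than $d$, then $xd=x\neq 0$ because $x$, being a vertex, is nonzero; hence $\{x,d\}\in E$, and so $d$ is adjacent to every other vertex of $G(S)$. In particular $G(S)$ is connected, since any vertex is joined to any other through $d$. For the diameter, let $x$ and $y$ be two distinct vertices. If one of them is $d$, or if $xy\neq 0$, they are adjacent and their distance is $1$; otherwise $x,y\neq d$ and $x-d-y$ is a path of length $2$ joining them. In every case the distance is at most $2$, so $\diam(G(S))\leq 2$.

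Thus the entire proposition reduces to the claim $x\leq d$, and I expect this to be the only real obstacle, as well as the sole place where the hypothesis $|\DAtom(S)|=1$ is used. I would establish it as the order-dual of the step underlying Proposition \ref{MinDiam}: passing upward from $x$ through strictly larger elements of $S-\{0,1\}$ leads to a maximal such element, i.e.\ a dual atom, which by uniqueness can only be $d$, forcing $x\leq d$. The delicate point is precisely this passage to a maximal element above $x$ --- the mirror image of ``every nonzero non-unit lies above an atom'' invoked in Proposition \ref{MinDiam} --- after which the identity $xd=x\neq 0$ immediately makes $d$ a universal neighbour and finishes the proof.
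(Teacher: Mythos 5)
Your hub argument coincides exactly with the paper's proof: the paper takes the unique dual atom $m$, asserts with no justification beyond the word ``obviously'' that every other vertex $y$ satisfies $y\leq m$, and then concludes $ym=y\neq 0$ and uses the paths $x-m-y$, just as you do. So your reduction of the proposition to the single claim $x\leq d$ is faithful, and you are right that this claim is the only real content and the only place $|\DAtom(S)|=1$ enters. The problem is that the justification you sketch --- passing upward from $x$ to a maximal element of $S-\{0,1\}$, which by uniqueness must be $d$ --- does not go through in a general bounded semilattice: an ascending chain in $S-\{0,1\}$ need not have any upper bound other than $1$, so Zorn's lemma is unavailable and there may be no maximal element above $x$ at all. (The same defect afflicts the dual step in Proposition \ref{MinDiam}, which silently assumes every nonzero non-unit lies above the unique atom; the paper only makes such an assumption explicit in the Artinian Corollary \ref{Artinianprop} and in the ``dually atomic'' hypothesis of Theorem \ref{IntersectionGraphLIdLatticeThm}.)

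Concretely, let $S=\{0,1,d\}\cup\{x_1,x_2,\dots\}$ with $0<x_1<x_2<\cdots<1$, $0<d<1$, and $d$ incomparable with every $x_i$; then $x_i\wedge x_j=x_{\min\{i,j\}}$ and $d\wedge x_i=0$, so $S$ is a bounded meet-semilattice, and $\DAtom(S)=\{d\}$ because each $x_i$ lies strictly below $x_{i+1}$. Yet no $x_i$ is below $d$, and in $G(S)$ the vertex $d$ is isolated while the $x_i$ span a complete graph: the claim $x\leq d$, and indeed the proposition itself as literally stated, fail. So the gap you flag is genuine and cannot be closed from the stated hypotheses; one must add a chain condition (finiteness, the ascending chain condition, or dual atomicity as in Theorem \ref{IntersectionGraphLIdLatticeThm}), under which your passage to a maximal element above $x$ becomes valid and the remainder of your argument is complete and identical to the paper's.
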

\begin{proof}
Let $m\in \DAtom(S)$. Obviously, if $y$ is a vertex of $G(S)$ distinct from $m$, then, $y\leq m$, and so, $ym = y \neq 0$. Therefore, $y$ and $m$ are adjacent. Clearly, this implies that for each vertices $x\neq m$ and $y\neq m$, we have the path $x - m - y$, which implies that the distance between any pair of vertices of $G(S)$ is at most 2 and the proof is complete.
\end{proof}

\begin{exm}
In this example, we give graphs of bounded semilattices satisfying the conditions of Proposition \ref{MaxDiam}, with diameter 0, 1, and 2. Let $k$ be a field and $X$ an indeterminate over $k$. Set $R=k[[X]]$ to be the formal power series ring over $k$. The set of all ideals of $R$ is the infinite chain $$R \supset (X) \supset (X^2) \supset \cdots \supset (X^n) \supset \cdots \supset (0).$$ Now, we set $S_n = \Id(R/(X^n))$ to be the set of all ideals of the ring $R/(X^n)$. Clearly, $S_n$ has at least three elements for any $n\geq 2$, $\DAtom(S_n) = (X)/(X^n)$, and $G(S_n) = K_{n-1}$. Therefore, $\diam(G(S_2)) = 0$ and $\diam(G(S_n))=1$, for any $n\geq 3$.

Now, let $T = k \times k[[X]]/(X^2)$. Obviously, the only maximal ideal of $T$ is the ideal $\mathfrak{n} = k \times (X)/(X^2)$. Suppose $\mathfrak{a} = k \times 0$ and $\mathfrak{b} = 0 \times k[[X]]/(X^2)$. We have $\mathfrak{a} \cap \mathfrak{b} = 0$, while $\mathfrak{a} \cap \mathfrak{n} \neq 0$ and $\mathfrak{b} \cap \mathfrak{n} \neq 0$. So, $d(\mathfrak{a}, \mathfrak{b}) = 2$ and this means that $\diam(G(T)) =2$.
\hfill $\diamond$ \end{exm}

Let us recall that a bounded lattice is called dually atomic if for every $x\in S-\{1\}$, there exists a dual atom $m$ such that $a \leq m$ \cite[\S1]{ChajdaHalasKuhr2007}.

\begin{thm}
\label{IntersectionGraphLIdLatticeThm}
Let $(S,+,\cdot,0,1)$ be a dually atomic bounded distributive lattice in which  $\DAtom(S)$ is nonempty.  If the graph $G(S)$ of $S$ has no isolated vertex, then $G(S)$ is connected with $\diam(G(S))\leq 4$.
\end{thm}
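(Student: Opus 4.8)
The plan is to exploit the dually atomic hypothesis to route any two vertices through dual atoms, which act as high-degree hubs. Let $a$ and $b$ be two distinct nonzero, non-identity elements of $S$; I want to build a path of length at most $4$ between them. Since $G(S)$ has no isolated vertex, each of $a$ and $b$ is adjacent to something. Because $S$ is dually atomic, for each vertex $v \neq 1$ there is a dual atom $m$ with $v \leq m$; in particular $vm = v \neq 0$, so every vertex is adjacent to at least one dual atom. Let $m_a \in \DAtom(S)$ with $a \leq m_a$ and $m_b \in \DAtom(S)$ with $b \leq m_b$. If $a = m_a$ we may simply take $m_a$ to be $a$ itself, and likewise for $b$; the case analysis below absorbs these degeneracies.

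\medskip

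The key reduction is therefore to bound the distance between two dual atoms $m_a$ and $m_b$. First I would dispose of the easy cases: if $m_a = m_b$, then $a - m_a - b$ is a path of length at most $2$. If $m_a \neq m_b$ but $m_a m_b \neq 0$, then $m_a$ and $m_b$ are adjacent and $a - m_a - m_b - b$ is a path of length at most $3$. The only remaining case is $m_a m_b = 0$. Here I would use distributivity together with the no-isolated-vertex hypothesis to find an intermediate vertex. Since $G(S)$ has no isolated vertices, $m_a$ is adjacent to some vertex $c$, so $m_a c \neq 0$; set $w = m_a c$, which is a nonzero element with $w \leq m_a$. The aim is to connect $w$ (or some such witness) to $m_b$, giving a path $a - m_a - w - \cdots - m_b - b$. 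The distributive law lets me split elements across $m_a$ and $m_b$: for any vertex $v$, consider $v m_a$ and $v m_b$, and use $v = v \cdot 1$ relations when $m_a + m_b = 1$.

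\medskip

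The main obstacle I expect is exactly the case $m_a m_b = 0$, and controlling the length of the detour to at most $4$. The natural move is to pick a common neighbor or a two-step bridge. Concretely, since $m_a$ is a dual atom and $m_a m_b = 0$, I would argue that $m_a + m_b = 1$ (a dual atom $m_a$ satisfies $m_a \leq m_a + m_b \leq 1$, so $m_a + m_b$ is either $m_a$ or $1$; if it were $m_a$ then $m_b \leq m_a$, forcing $m_a m_b = m_b \neq 0$, a contradiction). With $m_a + m_b = 1$ in hand, for any vertex $t$ distributivity gives $t = t \cdot 1 = t(m_a + m_b) = t m_a + t m_b$. Now take $t$ to be any neighbor of $m_a$: then either $t m_b \neq 0$, in which case $t$ is adjacent to $m_b$ and $a - m_a - t - m_b - b$ has length at most $4$; or $t m_b = 0$, forcing $t = t m_a \leq m_a$, so $t$ lies under $m_a$ and I can instead try a neighbor of $m_b$ by the symmetric argument. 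The delicate point is ensuring that one of the two sides yields a genuine bridge, and handling the subcase where every neighbor of $m_a$ lies entirely under $m_a$ and every neighbor of $m_b$ lies entirely under $m_b$; here I would show that two such witnesses $w_a \leq m_a$ and $w_b \leq m_b$ must themselves be joinable, again via distributivity applied to $w_a + w_b$, producing the required length-$4$ path and completing the diameter bound.
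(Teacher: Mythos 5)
Your reduction through dual atoms is sound as far as it goes: the derivation of $m_a + m_b = 1$ from $m_a m_b = 0$ is correct, and the splitting $t = t m_a + t m_b$ is the same use of distributivity that drives the paper's proof. The genuine gap is in your final subcase. First, a repairable slip: for the bridge $a - w_a - (w_a + w_b) - w_b - b$ to have length $4$, the witnesses must be neighbors of $a$ and $b$, not merely of $m_a$ and $m_b$ as you phrase it (routing through $m_a$ and $m_b$ as well would give length $6$); fortunately, in this subcase any neighbor $w_a$ of $a$ automatically satisfies $w_a \leq m_a$, since $w_a m_a \geq w_a a \neq 0$ makes $w_a$ a neighbor of $m_a$. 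But then the claim that the witnesses ``must themselves be joinable via $w_a + w_b$'' is precisely what can fail: if $w_a + w_b = 1$, distributivity gives $m_a = m_a w_a + m_a w_b = w_a + 0 = w_a$ (using $w_a \leq m_a$, $w_b \leq m_b$, $m_a m_b = 0$), and likewise $m_b = w_b$, so $w_a + w_b = m_a + m_b = 1$ is not a vertex and the bridge collapses. Nothing in your argument rules out the scenario in which the \emph{only} neighbor of $a$ is $m_a$ and the only neighbor of $b$ is $m_b$; there every choice of witnesses is forced to be $(m_a, m_b)$, every path from $a$ to $b$ must begin $a - m_a$ and end $m_b - b$, and your subcase hypothesis forbids any vertex adjacent to both $m_a$ and $m_b$, so no path of length at most $4$ exists to be exhibited. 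The only way out is to show this scenario is vacuous, which your sketch does not do.

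The missing idea is the paper's central maneuver: take the join of the \emph{endpoints}. If $a + b \neq 1$, then $a - (a+b) - b$ is already a path of length $2$ (note $a + b \neq a, b$ since $a b = 0$). If $a + b = 1$, then distributivity yields $m_a = m_a a + m_a b = a + 0 = a$ (because $b \leq m_b$ forces $m_a b \leq m_a m_b = 0$), and symmetrically $b = m_b$; so in the hard case the endpoints \emph{are} the dual atoms. Only after this identification does the neighbor-join bridge become safe: with $a = m_a$ a dual atom, a neighbor $b_1$ of $a$ with $b_1 + b_2 = 1$ would give $a = a b_1 + a b_2 = a b_1$, hence $a \leq b_1 < 1$ and $a = b_1$, a contradiction, so $b_1 + b_2 \neq 1$ and the length-$4$ path $a - b_1 - (b_1 + b_2) - b_2 - b$ exists. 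With this step inserted your argument closes and becomes essentially the paper's proof, which runs the endpoint-join case first and the neighbor-join last; without it, the final subcase remains unproved.
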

\begin{proof}
Let ${a}_1, {a}_2$  be two distinct vertices of $G(S)$. If ${a}_1 {a}_2 \neq 0$, then $d({a}_1,{a}_2)=1$. Now, suppose that ${a}_1 {a}_2=0$. By assumption, there are two elements ${m}_1$ and ${m}_2$ in $\DAtom(S)$ such that ${a}_1\leq {m}_1$ and ${a}_2\leq {m}_2$.

If ${m}_1={m}_2$, or ${m}_1  {m}_2\neq 0$, or ${a}_1 {m}_2\neq 0$, or
${a}_2 {m}_1\neq 0$, then $d({a}_1,{a}_2)\leq 3$. Therefore, we assume that ${m}_1\neq {m}_2$, ${m}_1 {m}_2= 0$, ${a}_1 {m}_2= 0$ and ${a}_2 {m}_1= 0$. If ${a}_1 + {a}_2 \neq 1$, then the path ${a}_1-({a}_1+{a}_2)-{a}_2$ is   of length 2. So, $d({a}_1,{a}_2)\leq 2$.

But if ${a}_1+{a}_2=1$, then ${m}_1={m}_1 {a}_1+{m}_1 {a}_2$. Since ${m}_1 {a}_2=0$, we have ${m}_1={m}_1 {a}_1$, which implies that ${a}_1={m}_1$. By a similar argument, ${a}_2={m}_2$. Also, since $G(S)$ has no isolated vertex, there are two vertices ${b}_1\neq {a}_1$ and ${b}_2\neq {a}_2$ such that ${a}_1$ is adjacent to ${b}_1$ and ${a}_2$ is adjacent to ${b}_2$.

If  ${b}_1  {b}_2 \neq 0$, then ${a}_1-{b}_1-{b}_2-{a}_2$ is a path of length 3. So, $d({a}_1,{a}_2)\leq 3$. If ${a}_1 {b}_2\neq 0$ or ${a}_2 {b}_1\neq 0$, then  $d({a}_1,{a}_2)\leq 2$.

Finally, let ${b}_1 {b}_2= 0$, ${a}_1 {b}_2= 0$, and ${a}_2 {b}_1= 0$. Our claim is that ${b}_1+{b}_2 \neq 1$. In contrary, let ${b}_1+{b}_2=1$. So, ${a}_1 = {a}_1 {b}_1+{a}_1 {b}_2$. So, ${a}_1 = {a}_1 {b}_1$, which implies that ${a}_1 \leq {b}_1$. But ${a}_1 = {m}_1$, so ${a}_1 = {b}_1$, a contradiction. By a similar argument, we have ${a}_2 = {b}_2$. Hence, ${b}_1+{b}_2 \neq 1$ and ${a}_1-{b}_1-({b}_1+{b}_2)-{b}_2-{a}_2$ is a path of length 4. So, $d({a}_1,{a}_2)\leq 4$. Consequently, $G(S)$ is connected with $\diam(G(R))\leq 4$ and the proof is complete.
\end{proof}

Let $R$ be a commutative ring with a nonzero identity and $M$ be a nonzero unital $R$-module. The $R$-module $M$ is called distributive if the lattice of $R$-submodules of $M$ is distributive \cite{Camillo1975}. As in \cite{AkbariTavallaeeGhezelahmad2012}, we denote the intersection graph of submodules of the $R$-module $M$ by $G(M)$.

\begin{cor}

\label{DistributiveModule}

Let $R$ be a commutative ring with a nonzero identity and $M$ be a nonzero unital $R$-module. If $M$ is a distributive $R$-module such that any submodule of $M$ is a subset of a maximal submodule of $M$ and $G(M)$ has no isolated vertex, then $G(M)$ is connected with $\diam(G(M))\leq 4$.
\hfill $\Box$ 
\end{cor}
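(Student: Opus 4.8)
The plan is to recognize this corollary as nothing more than Theorem \ref{IntersectionGraphLIdLatticeThm} restated in the language of modules, so the entire task is to verify that the module-theoretic hypotheses on $M$ translate exactly into the lattice-theoretic hypotheses of that theorem for the submodule lattice $\Sub_R(M)$. Accordingly, the first step is to set $S = \Sub_R(M)$. As recorded in Remark \ref{IntersectionGraphEx}(2), $(\Sub_R(M), \cap)$ is a bounded semilattice with least element $(0)$ and greatest element $M$, and the intersection graph $G(M)$ of the submodules of $M$ coincides with the graph $G(S)$ of this bounded semilattice. Moreover $S$ is in fact a bounded lattice $(\Sub_R(M), +, \cap, (0), M)$, where the join of two submodules is their sum and the meet is their intersection.

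The second step is to match the hypotheses one by one. The assumption that $M$ is a distributive $R$-module means, by definition \cite{Camillo1975}, precisely that the lattice $\Sub_R(M)$ is distributive; hence $S$ is a bounded distributive lattice. Next, I would observe that the dual atoms of $S$ are exactly the maximal submodules of $M$: by Definition \ref{atomdef}, a dual atom of $S$ is a maximal element of the poset $S - \{(0), M\}$, that is, a submodule $N$ with $(0) < N < M$ admitting no submodule strictly between $N$ and $M$, which is exactly a maximal (proper) submodule of $M$. With this identification, the hypothesis that every submodule of $M$ is contained in a maximal submodule says that every proper submodule $N \in S - \{M\}$ satisfies $N \leq m$ for some dual atom $m$, which is precisely the dually atomic condition on $S$. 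Finally, since $M$ is nonzero, the zero submodule $(0)$ is a proper submodule and is therefore contained in a maximal submodule by hypothesis, so $\DAtom(S) \neq \emptyset$.

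Having verified that $S = \Sub_R(M)$ is a dually atomic bounded distributive lattice with nonempty set of dual atoms, and that $G(M) = G(S)$ has no isolated vertex by assumption, the third and final step is simply to invoke Theorem \ref{IntersectionGraphLIdLatticeThm} for $S$. That theorem then yields directly that $G(M) = G(S)$ is connected with $\diam(G(M)) \leq 4$.

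The only point demanding any care is the bookkeeping in the second step, namely confirming that the dual atoms of $\Sub_R(M)$ are exactly the maximal submodules and that the phrase \emph{``every submodule lies in a maximal submodule''} is literally the dually atomic condition (with the nonemptiness of $\DAtom(S)$ following from $M \neq 0$). There is no genuine difficulty here: once the dictionary between the poset $\Sub_R(M) - \{(0), M\}$ and the proper nonzero submodules is fixed, every hypothesis matches, and the conclusion is immediate from the theorem.
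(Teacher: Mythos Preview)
Your proposal is correct and takes essentially the same approach as the paper: the paper offers no proof at all beyond the $\Box$, recording the corollary as an immediate specialization of Theorem~\ref{IntersectionGraphLIdLatticeThm}, and your argument is precisely the verification that the module-theoretic hypotheses translate into those of that theorem for $S=\Sub_R(M)$.
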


Let us recall that a trail in a graph $G$ is a walk in which all edges are distinct. A path in the graph $G$ is a trail in which all vertices (except possibly the first and last) are distinct. If $ P = x_0 \cdots x_{k-1}$ is a path in $G$ and $k \geq 3$, then the path $C = x_0 \cdots x_{k-1} x_0$ is a cycle in $G$. The minimum length of a cycle (contained) in the graph $G$ is called the girth of $G$, denoted by $\girth(G(S))$ \cite[p. 8]{Diestel2017}.

\begin{thm}
\label{girth}
Let $S$ be a bounded semilattice. If $G(S)$ contains a cycle, then we have $\girth(G(S)) = 3$.
\begin{proof}
In contrary, suppose that $\girth(G(S)) \geq 4$. This implies that every pair of elements $y$ and $z$ in $S-\{0,1\}$ with $yz \neq 0$ are comparable, because if they are not comparable, then $yz$ is different from $y$ and $z$ and therefore, $y - yz - z - y$ is cycle of length 3, a contradiction. Now, let $z-y-x-t$ be a path of length 3 in $G(S)$. Since any two elements in this path are comparable and any chain of length 2 in $S-\{0,1\}$ induces a cycle of length 3 in $G(S)$, the only possible cases are: $z \leq y$, $x \leq y$, $x \leq t$, or $y \leq z$, $y \leq x$ and we prove that each case leads us to a contradiction.

Case 1: If $z \leq y$, $x \leq y$, and $x \leq t$, then $x \leq yt$, which implies that $yt \neq 0$. Therefore, $y-x-t-x$ is a cycle of length 3 in $G(S)$, a contradiction.

Case 2: If $y \leq z$ and $y \leq x$, then $y \leq xz$, which implies that $xz \neq 0$. Therefore, $z-y-x-z$ is a cycle of length 3 in $G(S)$, again a contradiction. Hence, $\girth(G(S)) = 3$ and the proof is complete.
\end{proof}

\end{thm}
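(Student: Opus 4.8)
The plan is to argue by contradiction: assume $G(S)$ contains a cycle yet $\girth(G(S)) \geq 4$, so that $G(S)$ is triangle-free, and derive a contradiction. The whole argument rests on two elementary observations about how the semilattice order interacts with adjacency. First, suppose $y$ and $z$ are adjacent, so $yz \neq 0$. If $y$ and $z$ were incomparable, then $w = yz$ would satisfy $w < y$ and $w < z$, so $w$ is a vertex distinct from both (it is neither $0$, by adjacency, nor $1$, since $w \le y < 1$); moreover $yw = w \neq 0$ and $zw = w \neq 0$, so $w$ is adjacent to both and $y - w - z - y$ is a triangle. Hence, under the standing assumption, \emph{every edge of $G(S)$ joins two comparable vertices}; equivalently, by contraposition, non-adjacent vertices are incomparable (and, trivially, any two comparable vertices are adjacent). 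Second, any three-element chain $a < b < c$ in $S-\{0,1\}$ induces a triangle, since $ab = a$, $bc = b$, and $ac = a$ are all nonzero; so $G(S)$ contains no such chain.

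Next I would extract a path. Since $G(S)$ has a cycle of length at least $4$, it contains a path $z - y - x - t$ on four distinct vertices. By triangle-freeness the non-consecutive pairs $\{z,x\}$ and $\{y,t\}$ are non-adjacent, hence \emph{incomparable} by the first observation; meanwhile the three consecutive pairs are comparable. Applying the no-chain observation to the triple $z, y, x$, the fact that $y$ is comparable to both $z$ and $x$ while $z, x$ stay incomparable forces $y$ to be either a common upper bound ($z \le y$ and $x \le y$) or a common lower bound ($y \le z$ and $y \le x$) of $\{z, x\}$. The identical dichotomy applied to $y, x, t$ makes $x$ either an upper or a lower bound of $\{y, t\}$.

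Finally I would match the two dichotomies along the shared edge $\{y, x\}$, which leaves exactly two consistent configurations, each of which closes into a triangle. If $y \le z$ and $y \le x$, then $y = y^2 \le zx$, so $zx \neq 0$ and $z - y - x - z$ is a triangle. In the remaining configuration the orientations force $x \le y$ and $x \le t$ (together with $z \le y$), whence $x \le yt$, so $yt \neq 0$ and $y - x - t - y$ is a triangle. Either way we contradict triangle-freeness, and since a simple graph with a cycle has girth at least $3$, we conclude $\girth(G(S)) = 3$.

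The step I expect to require the most care is the exhaustive matching of the peak/valley dichotomies at $y$ and $x$: one must verify that the orientation of the middle edge $\{y,x\}$ is pinned down consistently by both triples, so that the spurious combinations — precisely those in which a hidden three-element chain, and hence a triangle, would already be present — are correctly excluded. Everything else is a routine translation between the order relations $\le$ and the adjacency condition $xy \neq 0$.
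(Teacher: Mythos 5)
Your proof is correct and takes essentially the same route as the paper's: the same two key observations (adjacent vertices must be comparable, since otherwise $y$--$yz$--$z$ is a triangle, and no three-element chain can exist in $S-\{0,1\}$), the same extraction of a path $z-y-x-t$ reduced to the two peak/valley configurations, and the same closing triangles via $y \leq zx$ and $x \leq yt$. You are merely a bit more explicit in eliminating the mixed orientations (and you implicitly correct the paper's typo, where the triangle $y-x-t-x$ should read $y-x-t-y$), but the argument is the same.
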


\section*{Acknowledgment}
 The research of the first author was in part supported by a grant from the Islamic Azad University of Qazvin Branch. The second author is supported by the Department of Engineering Science at the Golpayegan University of Technology and his special thanks go to the department for providing all the necessary facilities available to him for successfully conducting this research. The authors are grateful to John LaGrange for looking through the paper and telling some points which improved the paper.

\end{document}